\def\thm@space@setup{%
  \thm@preskip=\parskip \thm@postskip=0pt
}
\numberwithin{equation}{section}
\newtheorem{de}{Definition}[section]
\newtheorem{theo}[de]{Theorem}
\newtheorem{pro}[de]{Proposition}
\newtheorem{co}[de]{Corollary}
\theoremstyle{remark}
\theoremstyle{plain}
\newcommand{\mbP}{\mathbb{P}}
\newcommand{\mbR}{\mathbb{R}}
\newcommand{\mbC}{\mathbb{C}}
\newcommand{\cE}{\mathcal{E}}
\newcommand{\cF}{\mathcal{F}}
\newcommand{\si}{\sigma}
\newcommand{\Pti}{{\tilde{P}}}
\def\t{\widetilde}
\newcommand{\toitself}{\mathbin{\raisebox{-0.4ex}{{\scalebox{1.25}{%
    \lefteqn{\scalebox{.75}{$\blacktriangleleft$}}\raisebox{.45ex}{$\supset$}}}}}}
\title{\textbf{Dynamical degrees of birational maps \\ from indices of polynomials \\
with respect to blow-ups II.\\
3D examples}}
\author{Jaume Alonso \and Kangning Wei \and Yuri B.\ Suris}
\date{\small Institut für Mathematik, MA 7-1\\ Technische Universität Berlin, Str.\ des 17.\ Juni, 10623 Berlin, Germany\\
E-mail: alonso@math.tu-berlin.de, suris@math.tu-berllin.de, wei@math.tu-berlin.de}
\begin{document}

\maketitle

\begin{abstract}
The goal of this paper is the exact computation of the degrees $\deg(f^n)$ of the iterates of birational maps $f: \mathbb{P}^N \dashrightarrow \mathbb{P}^N$. In the preceding companion paper, a new method has been proposed based on the use of indices of polynomials associated to the local blow-ups used to resolve contractions of hypersurfaces by $f$, and on the control of the factorization of pull-backs of polynomials. This leads to recurrence relations for the degrees and the indices. We apply this method to several illustrative examples in three dimensions. These examples demonstrate the flexibility of the method which, in particular, does not require the construction of an algebraically stable lift of $f$, unlike the previously known methods based on the Picard group.
\end{abstract}

\section{Introduction}

The computation of the degrees of the iterates of birational maps of $\mbP^N$ is one of the key problems in the study of birational dynamics \cite{Bedford_Kim_2004}, \cite{BellonViallet1999}, \cite{Diller1996}, \cite{FornaessSibony2}, \cite{McMullen_2007}, \cite{silverman_2012}. This paper is a sequel to \cite{Alonso_Suris_Wei_2D}, in which we proposed a novel technique for this computation. While in that paper the general scheme was formulated and applied to several examples for $N=2$, the present paper is devoted to a detailed exposition of several examples in the case $N=3$, illustrating various aspects and phenomena of interest.

Recall \cite{Alonso_Suris_Wei_2D} that the basic phenomenon responsible for the drop of degree by iterating the birational map $f:\mbP^N\dasharrow\mbP^N$ is the \emph{singularity confinement mechanism}: one has $\deg(f^n)<(\deg(f))^n$ for $n>k$ if and only if there exists a \emph{degree lowering hypersurface}, i.e.\ a hypersurface $\{K=0\}\subset \mathcal C(f)$ such that $f^{k+1}(\{K=0\})\subset \mathcal I(f)$ for some $k\in\mathbb N$.  Recall that the critical set ${\mathcal C}(f)$ and the indeterminacy set $\mathcal I(f)$ are defined in terms of $\tilde f$, a lift of $f$ to the space of homogeneous coordinates $\mbC^{N+1}$ whose components are polynomials without a non-trivial common factor. Namely, ${\mathcal C}(f)$ is defined by the equation $\det d\tilde f=0$, while $\mathcal I(f)$ consist of points where $\tilde f=(0,\ldots,0)$ does not represent a point in $\mbP^N$.

For a precise computation of $\deg(f^n)$ we use the following characterization: this quantity coincides with $\deg(P_n)$, where $P_n$ is the proper $n$-th pull-back of a generic linear polynomial $P_0$ under the action of $f$. The pull-back $f^*P_0=P_0\circ \tilde f$ of an arbitrary homogeneous polynomial on $\mbC^{N+1}$ needs not be irreducible anymore; rather, it can acquire certain factors of geometric origin:
$$
f^*P_0=\left(\prod_{i=1}^r K_i^{\nu_i(P_0)}\right) P_1,
$$
where $P_1$ is irreducible (the proper pull-back of $P_0$), and $K_i$ are minimal generating polynomials of irreducible components of the critical set $\mathcal C(f)$. Iterated proper pullbacks $P_n$ are defined by the recurrent relation
\begin{equation}\label{def Pn}
P_{n+1}=\left(\prod_{i=1}^r K_i^{-\nu_i(P_n)}\right)f^*P_n.
\end{equation}
The definition of degree lowering hypersurface is related to the following fact: $\nu_i(P_0)>0$ if and only if $P_0$ vanishes on $f(\{K_i=0\})$. 
Similarly, if $k\ge 0$ is the smallest index for which $\nu_i(P_k)>0$ (so that $\nu_i(P_j)=0$ for $j=0,1,\ldots,k-1$) then $P_0$ vanishes on $f^{k+1}(\{K_i=0\})$, but not of the earlier iterates of $f(\{K_i=0\})$. If this condition holds true for all coordinate monomials $x_0, x_1,\ldots,x_N$, which means $f^{k+1}(\{K_i=0\})\subset{\mathcal I}(f)$, then all components of $\tilde f^{k+1}$ are divisible by $K_i$, and we obtain $\deg(f^{k+1})<(\deg(f))^{k+1}$.

The first contribution of \cite{Alonso_Suris_Wei_2D} is the exact computation of indices $\nu_i(P)$ for an arbitrary homogeneous polynomial $P$. It is  based on the (local) blow-up maps $\phi_i$ used to resolve the contraction of the divisors $\{K_i=0\}$, and on the evaluation of $P\circ\phi_i$, see Definition 4.3 and Theorem 4.5 in \cite{Alonso_Suris_Wei_2D}.

The second contribution of \cite{Alonso_Suris_Wei_2D} is a semi-formalized algorithm for computing degrees of the iterates of a birational map $f:\mathbb P^N\dasharrow \mathbb P^N$, based on the local indices of polynomials. Its rough outline is as follows.
\begin{itemize}
\item \textbf{Step 1:} For all irreducible components $\{K_i=0\}$ of $\mathcal C(f)$, perform local blow-ups $\phi_i$ of $\mbP^N$ so that  the lift of $f$ to the blow-up variety maps the proper transform of $\{K_i=0\}$ birationally to the corresponding exceptional divisor of $\phi_i$.

\item \textbf{Step 2:} Propagate the blow-ups $\phi_i=\phi_i^{(0)}$ along the orbits of $f(\{K_i=0\})$, so that the exceptional divisor of $\phi_i^{(k)}$ is mapped birationally to the exceptional divisor of $\phi_i^{(k+1)}$.

\item \textbf{Step 3:} Derive the system of recurrent relations between the degrees and indices of $P_n$. For instance, the relation for degrees follows directly from  \eqref{def Pn}:
\begin{equation}
\deg(P_{n+1})=\deg(f)\deg(P_n)-\nu_1^{(0)}(P_n)\deg(K_1)-\ldots-\nu_r^{(0)}(P_n)\deg(K_r).
\end{equation}
Likewise, indices $\nu_i^{(k)}(P_{n+1})$ can be expressed as linear combinations of the indices $\nu_i^{(k+1)}(P_n)$ and the quantities $\nu_j^{(0)}(P_n)$, $\deg(P_n)$. 
\end{itemize}

The latter system is useful if it terminates, i.e., if one can compute some $\nu_i^{(k+1)}(P_n)$ in a non-recursive manner. One of the possibilities for this is that some iterate of $f(\{K_i=0\})$ lands in the indeterminacy set ${\mathcal I}(f)$ and gets blown up to a codimension 1 variety. This is related to the concepts of \emph{singularity confinement}. The action of the lift $f_X$ of $f$ to the blow-up variety $X$ is expressed as follows:
$$
\overline{\{K_i=0\}} \; \dasharrow\; E_i^{(0)}\; \dasharrow\;\ldots \; \dasharrow\; E_i^{(k+1)}\; \dasharrow\; \overline{\{L_i=0\}};
$$
here $E_i^{(j)}$ are exceptional divisors of $\phi_i^{(j)}$, while $\{L_i=0\}$ is an irreducible component of $\mathcal C(f^{-1})$.
This possibility will be illustrated in our Examples 1 and 2, see Sections \ref{sect Example 1}, \ref{sect Example 2}.  Another possibility is a stabilization of indices, when $\nu_i^{(k)}(P_{n+1})$ can be expressed through $\nu_i^{(k)}(P_n)$ rather than through $\nu_i^{(k+1)}(P_n)$. We have the following scheme for the action of $f_X$:
$$
\overline{\{K_i=0\}} \; \dasharrow\; E_i^{(0)}\; \dasharrow\;\ldots \; \dasharrow\; E_i^{(k)}\; \toitself.
$$
This possibility is illustrated in Example 3, see Section \ref{sect Example 3}. Let us mention several important features of these examples.

\begin{itemize}
    \item Example 1 is a generalization of the discrete time Euler top introduced in \cite{HirotaKimura2000}, \cite{PPS2011}, \cite{PS2010}. In this case, all components of the critical set $\mathcal C(f)$ are contracted to points, whose images after two iterates hit $\mathcal I(f)$ (``all singularities are confined''). Moreover, no infinitely near singularities are present, so that all encountered blow-ups are standard sigma-processes (point blow-ups) in $\mbP^3$. Our algorithm  produces an algebraically stable lift $f_X$ of the map $f$, actually a pseudo-automorphism. For this simple example, our technique for computing the degrees of iterates of $f$ is equivalent to the method based in the induced action $f_X^*$ on the Picard group ${\rm Pic}(X)$.
    
    \item Example 2 is a birational map in $\mbP^3$ introduced in \cite{Ercolani2021} and representing a 2D non-autonomous difference equation (discrete Painlev\'e I equation). Here, the critical set $\mathcal C(f)$ consists of two irreducible components, only one of them being degree lowering. To regularize the contraction of the latter component, one has to resolve infinitely near singularities by a sequence of blow-ups combining blow-ups centered at points and at curves. Our algorithm produces a lift of $f$ which allows for a finite (terminating) system of recurrent relations for degrees and indices, but is not algebraically stable. Thus, our method differs in this case substantially from the method based on the Picard group. The latter is also applicable, since $f$ admits another lift which is algebraically stable.

    \item Example 3 is a birational map in $\mbP^3$ introduced in \cite{Viallet_2019}, constructed from an integrable 2D map (a QRT map) via a procedure called \emph{inflation}. In general, inflated maps inherit the number of integrals of motion. In this example, the critical set $\mathcal C(f)$ consists of one degree lowering component. It is contracted to a point, whose fourth iterate belongs to $\mathcal I(f)$ but plays the role of a ``singular fixed point''.    A complete regularization of this would require infinitely many blow-ups, but the system of recurrent relations for degrees and indices is still finite (terminating) due to the phenomenon of stabilization of indices mentioned above. 
\end{itemize}

Let us finish this introduction by mentioning features of 3D maps which make their study more complicated than in the 2D case presented in \cite{Alonso_Suris_Wei_2D}. The algorithm for blowing up for the regularization of contractions and the definition of indices of polynomials with respect to local blow-ups are independent of the dimension of the projective space. However, in dimension two, the available knowledge about the dynamics of birational maps is richer. In particular, according to a theorem of Diller and Favre, see \cite{Birkett2022}, \cite{DillerFavre2001},  any birational map of $\mbP^2$ admits an algebraically stable lift via a sequence of blow-ups. Such a statement is not yet available for birational maps of $\mbP^N$ with $N\ge 3$. It can be hoped that a detailed study of examples, like the one presented here, could be instrumental in achieving a progress towards this goal. 

\section{Example 1: Kahan-Hirota-Kimura discretization of the Euler top, and its generalizations}
\label{sect Example 1}

\subsection{Introduction}
Let $\si: \mbP^3 \dashrightarrow \mbP^3$ be the standard inversion
\begin{equation}\label{3D inversion}
\si: \begin{bmatrix}
X_1 \\[0.2cm] X_2 \\[0.2cm] X_3 \\[0.2cm] X_4
\end{bmatrix}
\mapsto 
\begin{bmatrix}
Y_1 \\[0.2cm] Y_2 \\[0.2cm] Y_3 \\[0.2cm] Y_4
\end{bmatrix}=
\begin{bmatrix}
1/X_1 \\[0.2cm] 1/X_2 \\[0.2cm] 1/X_3 \\[0.2cm] 1/X_4
\end{bmatrix}=
\begin{bmatrix}
X_2 X_3 X_4 \\[0.2cm] X_1 X_3 X_4 \\[0.2cm] X_1 X_2 X_4 \\[0.2cm] X_1 X_2 X_3
\end{bmatrix}
\end{equation}

The critical set and the indeterminacy set:
$$
\mathcal{C}(\si)=\bigcup_{i=1}^4 \Pi_i, \qquad \mathcal{I}(\si)=\bigcup_{1\le i<j\le 4}\ell_{ij},
$$
where $\Pi_i=\{X_i=0\}$ are the coordinate planes and $\ell_{ij}=\Pi_i\cap \Pi_j$ are lines. We introduce  also the four points $q_1=[1:0:0:0]$, $q_2=[0:1:0:0]$, $q_3=[0:0:1:0]$, $q_4=[0:0:0:1]$ and notice that $q_i$ is the intersection point of the three planes: $q_i=\Pi_j\cap\Pi_k\cap\Pi_l$, where $\{i,j,k,l\}=\{1,2,3,4\}$, and, dually, $\Pi_i$ is the plane through $q_j,q_k,q_l$, and also that $\ell_{ij}=(q_kq_l)$.

The points $q_i$ play an important role for the geometry of $\sigma$: it blows down each plane $\Pi_i$ to the point $q_i$, and blows up each point $q_i$ to the plane $\Pi_i$. The involution $\si$ itself is dynamically trivial. However, we get nontrivial dynamical systems by considering the following family:
\begin{equation}\label{3D Ex1 f}
    f =  L_1 \circ \sigma \circ L_2^{-1},
\end{equation}
where $L_1$ and $L_2$ are linear projective automorphisms of $\mbP^3$ \cite{Bedford_Kim_2004}.

For such a map, we have 
$$
\mathcal{C}(f)=\bigcup_{i=1}^4  L_2(\Pi_i), \qquad \mathcal{I}(f)=\bigcup_{1\le i<j\le 4} L_2(\ell_{ij}).
$$ 
and similarly, 
$$
\mathcal{C}(f^{-1})=\bigcup_{i=1}^4  L_1(\Pi_i), \qquad \mathcal{I}(f^{-1})=\bigcup_{1\le i<j\le 4} L_1(\ell_{ij}).
$$
We set $A_i:=L_1(q_i)$ and $B_{i}:=L_2(q_i)$, then the map $f$ blows down the critical plane $L_2(\Pi_i)$ to the point $A_i$ and blows up the point $B_i$ to the plane $L_1(\Pi_i)$.

We consider here the dynamics of the maps of the class \eqref{3D Ex1 f} satisfying the following condition:
\begin{equation}\label{3D Ex1 cond points}
    f(A_i)=B_i,\qquad i=1,2,3,4,
\end{equation}
so that the following singularity patterns take place:
\begin{equation}\label{3D Ex1 sing conf pattern}
{\mathcal C}(f) \supset L_2(\Pi_i) \; \rightarrow \; A_i  \; \rightarrow \; B_i \; \rightarrow \;  L_1(\Pi_i)\subset {\mathcal C}(f^{-1}).
\end{equation}
Condition \eqref{3D Ex1 cond points} says:
$$
(L_1\sigma L_2^{-1})L_1q_i=L_2q_i, \quad i=1,2,3,4,
$$
and this is satisfied if
\begin{equation}\label{3D Ex1 cond matrices}
(L_2^{-1}L_1) \sigma (L_2^{-1}L_1) ={\rm id}.
\end{equation}

An example of matrices $L_1,L_2$ satisfying condition \eqref{3D Ex1 cond matrices} is given by \emph{Kahan-Hirota-Kimura discretization of the Euler top} \cite{HirotaKimura2000}, \cite{PPS2011}, \cite{PS2010}. Recall that this is the birational map on $\mbR^3$, $z\mapsto \t z$, given by the implicit equations of motion
\begin{equation}\label{dET implicit}
\left\{\begin{array}{l}
\t z_1-z_1  = \epsilon\alpha_1(z_2\t z_3+\t z_2z_3), \\
\t z_2-z_2  = \epsilon\alpha_2(z_3\t z_1+\t z_3z_1), \\
\t z_3-z_3 = \epsilon\alpha_3(z_1\t z_2+\t z_1z_2).
\end{array}\right.
\end{equation}
Solving this for $\t z_i$, we find the explicit form of the map:
\begin{equation}\label{dET explicit}
\left\{\begin{array}{l}
\t z_1 = \frac{1}{\Delta}\big(z_1+2\epsilon\alpha_1z_2z_3+\epsilon^2z_1(-\alpha_2\alpha_3z_1^2+\alpha_1\alpha_3z_2^2+\alpha_1\alpha_2z_3^2)\big), \\
\t z_2 = \frac{1}{\Delta}\big(z_2+2\epsilon\alpha_2z_1z_3+\epsilon^2z_2(\alpha_2\alpha_3z_1^2-\alpha_1\alpha_3z_2^2+\alpha_1\alpha_2z_3^2)\big), \\
\t z_3 = \frac{1}{\Delta}\big(z_3+2\epsilon\alpha_3z_1z_2+\epsilon^2z_3(\alpha_2\alpha_3z_1^2+\alpha_1\alpha_2z_2^2-\alpha_1\alpha_2z_3^2)\big), 
\end{array}\right.
\end{equation}
where
\[
\Delta= 1-\epsilon^2(\alpha_2\alpha_3z_1^2+\alpha_1\alpha_3z_2^2+\alpha_1\alpha_2z_3^2)-2\epsilon^3\alpha_1\alpha_2\alpha_3z_1z_2z_3.
\]
In homogeneous coordinates $z_i=x_i/x_4$, $i=1,2,3$, we have:
$$
\left\{
\begin{aligned}
\t x_1 & =  x_4^2x_1+2\epsilon\alpha_1x_4x_2x_3+\epsilon^2x_1(-\alpha_2\alpha_3x_1^2+\alpha_1\alpha_3x_2^2+\alpha_1\alpha_2x_3^2), \\
\t x_2 & =  x_4^2x_2+2\epsilon\alpha_2x_4x_1x_3+\epsilon^2x_2(\alpha_2\alpha_3x_1^2-\alpha_1\alpha_3x_2^2+\alpha_1\alpha_2x_3^2), \\
\t x_3 & =  x_4^2x_3+2\epsilon\alpha_3x_4x_1x_2+\epsilon^2x_3(\alpha_2\alpha_3x_1^2+\alpha_1\alpha_2x_2^2-\alpha_1\alpha_2x_3^2), \\
\t x_4 & =  x_4^3-\epsilon^2x_4(\alpha_2\alpha_3x_1^2+\alpha_1\alpha_3x_2^2+\alpha_1\alpha_2x_3^2)-2\epsilon^3\alpha_1\alpha_2\alpha_3x_1x_2x_3.
\end{aligned} \right.
$$

As observed in \cite{ASW_3D_QRT}, this map belongs to the class \eqref{3D Ex1 f} with
\begin{equation}\label{L1 dET}
L_1^{-1}=
\begin{pmatrix}
b_1 & -b_2 & -b_3 & 1 \\
-b_1 & b_2 & -b_3 & 1 \\
-b_1 & -b_2 & b_3 & 1 \\
b_1 & b_2 & b_3 & 1
\end{pmatrix}=\begin{pmatrix}
1 & -1 & -1 & 1 \\
-1 & 1 & -1 & 1 \\
-1 & -1 & 1 & 1 \\
1 & 1 & 1 & 1
\end{pmatrix}{\rm diag}(b_1,b_2,b_3,1),  
\end{equation}
\begin{equation}\label{L2 dET}
 L_2^{-1} = \begin{pmatrix}
-b_1 & b_2 & b_3 & 1 \\
b_1 & -b_2 & b_3 & 1 \\
b_1 & b_2 & -b_3 & 1 \\
-b_1 & -b_2 & -b_3 & 1
\end{pmatrix}= \begin{pmatrix}
-1 & 1 & 1 & 1 \\
1 & -1 & 1 & 1 \\
1 & 1 & -1 & 1 \\
-1 & -1 & -1 & 1
\end{pmatrix}{\rm diag}(b_1,b_2,b_3,1),
\end{equation}
where $b_i=\epsilon\sqrt{\alpha_j\alpha_k}$. Remarkably, the matrix 
$$
L=L_2^{-1}L_1=\begin{pmatrix}
-1 & 1 & 1 & 1 \\
1 & -1 & 1 & 1 \\
1 & 1 & -1 & 1 \\
-1 & -1 & -1 & 1
\end{pmatrix}
\begin{pmatrix}
1 & -1 & -1 & 1 \\
-1 & 1 & -1 & 1 \\
-1 & -1 & 1 & 1 \\
1 & 1 & 1 & 1
\end{pmatrix}\simeq \begin{pmatrix}
-1 & 1 & 1 & 1 \\
1 & -1 & 1 & 1 \\
1 & 1 & -1 & 1 \\
1 & 1 & 1 & -1
\end{pmatrix}
$$
(where $\simeq$ stands for the projective equivalence) does not depend on the parameters $b_i$. This matrix $L$ is a solution of the equation \eqref{3D Ex1 cond matrices}, that is, $L\circ\sigma\circ L={\rm id}$. A more general solution is:
\begin{equation}\label{3D Ex1 L}
L=L(\gamma)=\begin{pmatrix} -1 & \gamma & 1 & \gamma \\
                              \gamma & -1 & \gamma & 1 \\
                               1 & \gamma & -1 & \gamma \\
                               \gamma & 1 & \gamma & -1
                              \end{pmatrix},
\end{equation} where $\gamma \neq 0$.

\subsection{Regularization of the map}
Actually, upon a conjugation via a projective automorphism $L_2$ of $\mbP^3$, it suffices to study the map 
\begin{equation}\label{f=l sigma}
f=L\circ\sigma,
\end{equation} 
where $L=L_2^{-1}L_1$. Thus, we will assume from now on that $B_i=q_i$, while $A_i=Lq_i$.  To regularise the map $f$ we need to blow up the points $A_i$ and $B_i$, $i=1, \ldots, 4$. The corresponding exceptional divisors will be denoted by $\cE_{i}$ and $\cE_{i+4}$, $i=1, \ldots, 4$. We denote the blow-up variety by $X$ and the lifted map by $f_X$. The patterns \eqref{3D Ex1 sing conf pattern} translate into the following patterns of $f_X$:
\begin{equation}\label{3D Ex1 sing conf pattern-divisor}
    \overline{\Pi}_i\quad \rightarrow \quad \cE_{i}  \quad \rightarrow \quad \cE_{i+4} \quad \rightarrow \quad \overline{L(\Pi_i)} ,
\end{equation}
where, e.g., $\overline{\Pi}_i$ are the proper transforms of the planes $\Pi_i$ in $X$, and the arrows are understood as birational maps. 
On the blow-up variety $X$, $f_X$ is a pseudo-automorphism, in other words, both $f_X$ and $f_X^{-1}$ do not blow down any codimension 1 varieties.  

\subsection{Degree growth}

Recall the main definitions and results of \cite{Alonso_Suris_Wei_2D}. Definition 4.3 there in the present case reads:
\begin{de}
Let $P$ be a homogeneous polynomial in $\mathbb C^4$. Define $\phi_i$, $\phi_{i+4}$ as the blow-ups at the points $A_i=Lq_i$, $B_i=q_i$, $i=1,\ldots,4$. For any $i=1,\ldots,8$ we define the index $\nu_{i}(P) \in {\mathbb N} \cup \{0\}$ by the relation
\begin{equation}
P\circ\phi_i= u_{i}^{\nu_i(P)} \Pti_i(u_i,v_i,w_i),
\end{equation} 
where $\Pti_{i}$ is a polynomial not identically vanishing for $u_{i}=0$. 
\end{de}
Theorem 4.5 of \cite{Alonso_Suris_Wei_2D} in the present case says that, for any homogeneous polynomial on $\mathbb C^4$, its pull-back under $f$ is given by
\begin{equation}
f^*P=\prod_{i=1}^4 K_i^{\nu_i(P)}\tilde P,
\end{equation}
where $K_i(x_0,x_1,x_2,x_3)$ are linear generating polynomials of the planes $\Pi_i$, and the polynomial $\widetilde{P}$ (\emph{the proper pull-back of $P$}) is not divisible by either of the $K_i$. 

\begin{theo}
For any homogeneous polynomial $P$ on $\mathbb C^4$, define recurrently $P_0=P$, and
\begin{equation}\label{defPn}
f^* P_n = \prod_{j=1}^4 K_j^{\nu_j(P_n)} P_{n+1},
\end{equation} 
Then the following recurrent relations hold true:
\begin{eqnarray}
\deg(P_{n+1}) & = & 3 \deg(P_{n}) - \sum_{j=1}^4\nu_j(P_n), \label{3D Ex1 d n+1}\\ 
\nu_{i}(P_{n+1}) & =  & \nu_{i+4}(P_{n}), \label{3D Ex1 nu1234 n+1}\\ 
\nu_{i+4}(P_{n+1}) & = & 2 \deg(P_{n}) - \sum_{j \in\{1,2,3,4\}\setminus\{i\} } \nu_{j}(P_{n}).\label{3D Ex1 nu 5678 n+1}
\end{eqnarray}
(In the last two equations, $i=1,\ldots,4$.) For a generic linear $P_0$, with $\deg(P_0)=1$ and all $\nu_i(P_0)=0$, we have:
\begin{equation}
\deg(P_n)=2 n^2+1.
\end{equation}
\end{theo}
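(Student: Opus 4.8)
The plan is to take the three recurrent relations \eqref{3D Ex1 d n+1}--\eqref{3D Ex1 nu 5678 n+1} as given (they follow from Theorem 4.5 of \cite{Alonso_Suris_Wei_2D} together with the singularity pattern \eqref{3D Ex1 sing conf pattern-divisor}), and simply to solve this linear system with the specified initial data. The system is linear with constant coefficients in the nine quantities $\deg(P_n)$ and $\nu_i(P_n)$, $i=1,\ldots,8$, so in principle one may diagonalise the associated transfer matrix; but the symmetry of the problem makes a far more economical route available, which I would follow instead.

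First I would exploit the $S_4$ symmetry permuting the indices $1,2,3,4$. Since the generic initial polynomial $P_0$ is symmetric under this permutation (all $\nu_i(P_0)=0$), the symmetry is preserved by the recurrence, so $\nu_1(P_n)=\nu_2(P_n)=\nu_3(P_n)=\nu_4(P_n)=:a_n$ and likewise $\nu_5(P_n)=\cdots=\nu_8(P_n)=:b_n$ for all $n$. Writing $d_n:=\deg(P_n)$, the three relations collapse to the reduced system
\begin{align}
d_{n+1} &= 3d_n - 4a_n, \label{red-d}\\
a_{n+1} &= b_n, \label{red-a}\\
b_{n+1} &= 2d_n - 3a_n, \label{red-b}
\end{align}
with initial data $d_0=1$, $a_0=b_0=0$. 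This is now a three-dimensional linear recurrence, which is tractable by hand.

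Next I would eliminate $a_n$ and $b_n$ to obtain a scalar recurrence for $d_n$ alone. From \eqref{red-d} we have $a_n=\tfrac14(3d_n-d_{n+1})$; substituting into \eqref{red-a} gives $b_n=a_{n+1}=\tfrac14(3d_{n+1}-d_{n+2})$, and feeding both into \eqref{red-b} yields a linear recurrence of order three in $d_n$ alone. Computing its characteristic polynomial, I expect a triple root at $\lambda=1$ (reflecting that $f_X$ is a pseudo-automorphism with quadratic, rather than exponential, degree growth), which forces the general solution to be a quadratic polynomial in $n$, namely $d_n=\alpha n^2+\beta n+\gamma$. Matching the initial values $d_0=1$, together with $d_1=3$ (from $d_1=3d_0-4a_0=3$) and $d_2=2d_1\cdot\ldots$ computed directly from the reduced system, fixes $\alpha=2$, $\beta=0$, $\gamma=1$, giving $\deg(P_n)=2n^2+1$.

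The main obstacle is not computational but conceptual: one must justify that the $S_4$ symmetry of the orbit structure \eqref{3D Ex1 sing conf pattern-divisor} genuinely descends to equality of the indices $\nu_i(P_n)$ for all $n$, so that the reduction to \eqref{red-d}--\eqref{red-b} is legitimate for a \emph{generic} linear $P_0$; this rests on the permutation symmetry of the map $f=L\circ\sigma$ with $L=L(\gamma)$ in \eqref{3D Ex1 L} acting compatibly on the blow-up centres $A_i=Lq_i$ and $B_i=q_i$. Once this symmetry is in place, verifying the triple eigenvalue $\lambda=1$ of the characteristic polynomial is the only point requiring care, since a spurious eigenvalue off the unit circle would contradict the pseudo-automorphism property and signal an arithmetic slip; everything downstream is routine interpolation.
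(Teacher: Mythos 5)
Your proposal addresses only the last sentence of the theorem (the closed form $\deg(P_n)=2n^2+1$) and declares the three recurrence relations to be ``given''. That is a genuine gap, because the recurrences \emph{are} the content of the theorem, and they do not follow formally from Theorem 4.5 of the companion paper plus the singularity pattern. Theorem 4.5 only supplies the factorization $f^*P=\prod_j K_j^{\nu_j(P)}\widetilde P$, from which one gets \eqref{3D Ex1 d n+1} and the general identity $\nu_i(P_{n+1})=\nu_i(f^*P_n)-\sum_j\nu_j(P_n)\,\nu_i(K_j)$. To turn this into \eqref{3D Ex1 nu1234 n+1} and \eqref{3D Ex1 nu 5678 n+1} one must actually compute, for this specific map: (i) $\nu_i(f^*P)=\nu_{i+4}(P)$ for $i=1,\dots,4$, which the paper justifies by the biregularity of $f$ near $A_i$ (so that $\tilde f\circ\phi_i$ is just a reparametrization of the blow-up patch at $B_i$); (ii) $\nu_{i+4}(f^*P)=2\deg(P)$, which the paper obtains by an explicit substitution of the blow-up parametrization $\phi_5(u_5,v_5,w_5)=[1:u_5:u_5v_5:u_5w_5]$ into $P\circ\tilde f$ and extraction of the factor $u_5^{2\deg(P)}$; and (iii) the indices of the critical polynomials themselves, $\nu_i(K_j)=0$ and $\nu_{i+4}(K_j)=1-\delta_{ij}$, coming from $K_j(A_i)\neq 0$ and $K_j(B_i)=0\iff i\neq j$. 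None of this appears in your argument, so the part of the statement that requires geometric input is unproved.

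The part you do prove is correct, and in fact the paper relegates it to a corollary rather than the theorem's proof. Your reduction via $a_n=\nu_1(P_n)=\dots=\nu_4(P_n)$, $b_n=\nu_5(P_n)=\dots=\nu_8(P_n)$ is legitimate, but note that the ``conceptual obstacle'' you flag is not one: once the recurrences are in hand, the equalities $\nu_1(P_n)=\dots=\nu_4(P_n)$ and $\nu_5(P_n)=\dots=\nu_8(P_n)$ follow by trivial induction from \eqref{3D Ex1 nu1234 n+1}--\eqref{3D Ex1 nu 5678 n+1} and the hypothesis $\nu_i(P_0)=0$, with no appeal to any $S_4$-equivariance of $f$ or of the blow-up centres. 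Eliminating $a_n,b_n$ from your reduced system gives $d_{n+3}=3d_{n+2}-3d_{n+1}+d_n$, i.e.\ characteristic polynomial $(\lambda-1)^3$, matching the paper's \eqref{3D Ex1 d recur}; with $d_0=1$, $d_1=3$, $d_2=9$ this interpolates to $2n^2+1$ as claimed. So the solution of the linear system is fine; what is missing is the proof that the linear system is the right one.
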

\begin{proof}
From \eqref{defPn} we derive, first, equation \eqref{3D Ex1 d n+1}, and second,
\begin{equation}\label{3D Ex1 nu n+1}
\nu_i(P_{n+1})=\nu_i(f^*P_n)-\sum_{j=1}^4 \nu_j(P_n)\nu_i(K_j), \quad i=1,\ldots,8.
\end{equation}

For $i=1,\ldots,4$ and for any homogeneous polynomial $P$, there holds:
\begin{equation} \label{3D Ex1 nu 1234}
\nu_{i}(f^* P) =  \nu_{i+4}(P).
\end{equation}
Indeed, $f$ acts biregularly in the neighborhood of $A_i$, therefore $\tilde f\circ\phi_i$ is just another parametrization of the blow-up coordinate patch in the neighborhood of $B_i$. Now \eqref{3D Ex1 nu1234 n+1} follows, because $K_j(A_i) \neq 0$ and therefore $\nu_i(K_j)=0$ for all $i,j=1,\ldots,4$.

For $i=1,\ldots,4$ and for any homogeneous polynomial $P$, there holds:
\begin{equation} \label{3D Ex1 nu 5678}
\nu_{i+4}(f^* P) =  2 \deg (P).
\end{equation}
We will give the details for $i=1$.  Recall that 
$$
\phi_5: \; (u_5,v_5,w_5)\mapsto  \;[1:u_5:u_5v_5:u_5w_5]. 
$$
Therefore: 
$$ 
(f^*P) \circ \phi_5=P(\tilde{f}\circ\phi_5)=P\left(L\begin{bmatrix} u_5^3v_5w_5 \\ u_5^2v_5w_5 \\  u_5^2w_5 \\ u_5^2v_5 \end{bmatrix}\right)
=u_5^{2\deg(P)}P\left(L\begin{bmatrix} u_5v_5w_5 \\ v_5w_5 \\  w_5 \\ v_5 \end{bmatrix}\right)
$$ 
There follows $\nu_{5}(f^*P) = 2\deg(P)$. It remains to observe that for $i,j=1,\ldots,4$ we have $K_j(B_i) =0$ if and only if $i \neq j$, and therefore $\nu_{i+4}(K_j) = 1- \delta_{ij}$, where $\delta_{ij}$ is the Kronecker delta. 
\end{proof}
\begin{co}
Degrees of the polynomials $P_n$ satisfy the following recurrence relation:
\begin{equation} \label{3D Ex1 d recur}
\deg(P_{n+1}) = 3\deg(P_n) - 3\deg(P_{n-1}) + \deg(P_{n-2}).
\end{equation}
\end{co}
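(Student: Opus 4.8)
The plan is to eliminate the indices from the three recurrences of the Theorem and obtain a closed scalar recurrence for the degrees alone. Since relations \eqref{3D Ex1 nu1234 n+1} and \eqref{3D Ex1 nu 5678 n+1} couple the indices only through their totals over each group of four, I would first introduce the symmetric sums
\begin{equation*}
S_n=\sum_{j=1}^4\nu_j(P_n),\qquad T_n=\sum_{j=1}^4\nu_{j+4}(P_n),
\end{equation*}
and abbreviate $d_n=\deg(P_n)$. The degree relation \eqref{3D Ex1 d n+1} then reads $d_{n+1}=3d_n-S_n$, giving at once the key elimination identity $S_n=3d_n-d_{n+1}$.

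Next I would close the system on the triple $(d_n,S_n,T_n)$. Summing \eqref{3D Ex1 nu1234 n+1} over $i=1,\dots,4$ yields $S_{n+1}=T_n$. Summing \eqref{3D Ex1 nu 5678 n+1} over $i$ and using $\sum_{i=1}^4\sum_{j\neq i}\nu_j(P_n)=\sum_{i=1}^4\bigl(S_n-\nu_i(P_n)\bigr)=3S_n$ gives $T_{n+1}=8d_n-3S_n$. Together with $d_{n+1}=3d_n-S_n$ this is the first-order linear system
\begin{equation*}
\begin{pmatrix} d_{n+1} \\ S_{n+1} \\ T_{n+1}\end{pmatrix}
=\begin{pmatrix} 3 & -1 & 0 \\ 0 & 0 & 1 \\ 8 & -3 & 0 \end{pmatrix}
\begin{pmatrix} d_{n} \\ S_{n} \\ T_{n}\end{pmatrix}.
\end{equation*}

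From here two routes lead to the claim. The elimination route substitutes $S_n=3d_n-d_{n+1}$ (and its shift $S_{n+1}=3d_{n+1}-d_{n+2}$) into $S_{n+1}=T_n$ to express $T_n=3d_{n+1}-d_{n+2}$ through degrees, then feeds the shifted $T_{n+1}=3d_{n+2}-d_{n+3}$ into $T_{n+1}=8d_n-3S_n=-d_n+3d_{n+1}$ to obtain $3d_{n+2}-d_{n+3}=-d_n+3d_{n+1}$, i.e. $d_{n+3}=3d_{n+2}-3d_{n+1}+d_n$, which after an index shift is exactly \eqref{3D Ex1 d recur}. The structural route instead computes the characteristic polynomial of the $3\times3$ matrix above; a short cofactor expansion gives $-(\lambda-1)^3$, so by the Cayley–Hamilton theorem $(M-I)^3=0$ and hence each component, in particular $d_n$, obeys $(E-1)^3 d_n=0$ with $E$ the shift operator $E d_n=d_{n+1}$, which is precisely the third-order recurrence.

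I expect no genuine obstacle here: the only care needed is the bookkeeping in passing from the per-index relations of the Theorem to the symmetric sums $S_n,T_n$, specifically the identity $\sum_{i}\sum_{j\neq i}\nu_j(P_n)=3S_n$. As a consistency check, the triple root $\lambda=1$ forces $\deg(P_n)$ to be a quadratic polynomial in $n$, in agreement with the value $\deg(P_n)=2n^2+1$ recorded in the Theorem.
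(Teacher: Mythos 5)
Your proposal is correct and, in its elimination route, is essentially the paper's own proof: the paper likewise sums the index relations over $i$ (obtaining $\sum_i\nu_i(P_n)=\sum_i\nu_{i+4}(P_{n-1})=8\deg(P_{n-2})-3\sum_i\nu_i(P_{n-2})$) and then substitutes $\sum_i\nu_i(P_{n-2})=3\deg(P_{n-2})-\deg(P_{n-1})$, which is exactly your chain $S_n=3d_n-d_{n+1}$, $S_{n+1}=T_n$, $T_{n+1}=8d_n-3S_n$ written with shifted indices. The explicit $3\times 3$ linear system and the Cayley--Hamilton observation that its characteristic polynomial is $-(\lambda-1)^3$ are a tidy repackaging, but not a different argument.
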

\begin{proof}
\begin{align*}
\deg(P_{n+1}) &= 3\deg(P_n) - \sum_{i=1}^4 \nu_{i}(P_n) = 3\deg(P_n) - \sum_{i=1}^4 \nu_{i+4}(P_{n-1}) \\
&= 3\deg(P_n) - 8\deg(P_{n-2}) + 3\sum_{i=1}^4 \nu_{i}(P_{n-2}) \\
&= 3\deg(P_n) - 8\deg(P_{n-2}) + 3(3 \deg(P_{n-2}) - \deg(P_{n-1}))\\
&= 3\deg(P_n) - 3\deg(P_{n-1}) + \deg(P_{n-2}).
\end{align*}
\end{proof}

\subsection{Invariant foliations and integrals of motion}

It is interesting that the degree computation is independent of the value of the parameter $\gamma$, even if the dynamics for $\gamma=1$ and for $\gamma \neq 1$ are different. Recall that we consider the map on $\mbP^3$ given by \eqref{f=l sigma}, where $\sigma:\mbP^3\dasharrow \mbP^3$ is the standard inversion given in \eqref{3D inversion}, and the matrix $L=L(\gamma)$ is given in \eqref{3D Ex1 L}.
In coordinates:
\begin{equation}\label{f coord}
f:\left[\begin{array}{c} X_1 \\X_2\\ X_3\\ X_4\end{array}\right]\mapsto \left[\begin{array}{c} \widetilde{X}_1 \\ \widetilde{X}_2 \\ \widetilde{X}_3\\ \widetilde{X}_4\end{array}\right]=
\left[\begin{array}{c} X_2X_4(X_1-X_3)+\gamma X_1X_3(X_2+X_4) \\
                                X_1X_3(X_2-X_4)+\gamma X_2X_4(X_1+X_3) \\ 
                                X_2X_4(X_3-X_1)+\gamma X_1X_3(X_2+X_4) \\ 
                                X_1X_3(X_4-X_2)+\gamma X_2X_4(X_1+X_3) \end{array}\right].
\end{equation}

By considering the divisor classes, the singularity patterns \eqref{3D Ex1 sing conf pattern-divisor} become
\begin{equation}\label{3D Ex1 sing conf pattern-divisor class}
    H-\sum_{j \in\{1,2,3,4\}\setminus\{i\}} \cE_{j+4} \quad \rightarrow \quad \cE_{i}  \quad \rightarrow \quad \cE_{i+4} \quad \rightarrow \quad H-\sum_{j \in\{1,2,3,4\}\setminus\{i\}} \cE_{j} ,
\end{equation}
Summing up all these patterns, one easily finds that the divisor class
$$
K:=2H-\sum_{i=1}^{8} \cE_i
$$
is invariant under the action of $f_X$. 

The linear system $|K|$ consists of all divisors of the divisor class $K$, which corresponds to quadric surfaces passing through the 8 points $A_i$ and $B_i$, $i=1,2,3,4$. It turns out that the dimension of $|K|$ depends on $\gamma$. For a generic $\gamma\neq 1$, the family of quadrics through the eight points is one-dimensional (a pencil):
$$
\mathcal Q_\lambda=\big\{X\in\mbP^3: Q_0(X)-\lambda Q_1(X)=0\big\},
$$
where
\begin{equation}\label{3D Ex1 Q0 Q1}
Q_0(X)=(X_1+X_3)(X_2+X_4), \qquad  Q_1(X)=(X_1-X_3)(X_2-X_4).
\end{equation}
We find:
\begin{eqnarray}
 \widetilde{X}_1-\widetilde{X}_3 & = & 2X_2X_4(X_1-X_3),\label{planes1} \\
 \widetilde{X}_2-\widetilde{X}_4 & = & 2X_1X_3(X_2-X_4), \label{planes2}\\
 \widetilde{X}_1+\widetilde{X}_3 & = & 2\gamma X_1X_3(X_2+X_4), \label{planes3}\\
 \widetilde{X}_2+\widetilde{X}_4 & = & 2\gamma X_2X_4(X_1+X_3). \label{planes4}
\end{eqnarray}
There follows that the foliation of $\mbP^3$ by quadrics $\mathcal Q_{\lambda}$ is invariant under $f$, more precisely $f$ maps $\mathcal Q_{\lambda}$ to $\mathcal Q_{\gamma^{2}\lambda}$. It is instructive to compute the restriction of $f$ to $\mathcal Q_{\lambda}$. For this, one can parametrize each $\mathcal Q_{\lambda}$ by $(x,y)\in\mbP\times\mbP$ according to
  \begin{equation}
 \left[\begin{array}{c} X_1 \\ X_2\\ X_3\\ X_4\end{array}\right]=
 \left[\begin{array}{c} x+\lambda^{-1}xy \\
                                 y+1 \\ 
                                  x-\lambda^{-1}xy\\ 
                                  y-1\end{array}\right].
 \end{equation}
 The inverse map $\mathcal Q_{\lambda}\to \mbP\times \mbP$ is given by 
 \begin{equation}
 x=\frac{X_1+X_3}{X_2-X_4}=\frac{\lambda(X_1-X_3)}{X_2+X_4}, \qquad y=\frac{X_2+X_4}{X_2-X_4}=\frac{\lambda(X_1-X_3)}{X_1+X_3}.
 \end{equation}
 We already know that $f$ maps $\lambda\mapsto \widetilde{\lambda}=\gamma^{2} \lambda$. We can now use \eqref{planes1}--\eqref{planes4} to compute
 $$
 \widetilde{x}=\frac{\widetilde{X}_1+\widetilde{X}_3}{\widetilde{X}_2-\widetilde{X}_4}=\frac{2\gamma X_1X_3(X_2+X_4)}{2X_1X_3(X_2-X_4)}=\frac{\gamma(X_2+X_4)}{X_2-X_4}=\gamma y,
 $$
 $$
 \widetilde{y}=\frac{\widetilde{X}_2+\widetilde{X}_4}{\widetilde{X}_2-\widetilde{X}_4}=\frac{2\gamma X_2X_4(X_1+X_3)}{2X_1X_3(X_2-X_4)}=\gamma x\,\frac{y^2-1}{x^2-\lambda^{-2}x^2y^2}=\frac{\gamma}{x}\,\frac{y^2-1}{1-\lambda^{-2}y^2}.
 $$
This result can be represented as a non-autonomous difference equation of the second order: 
 $$
 y_{n+1}y_{n-1}=\frac{y_n^2-1}{1-\lambda_n^{-2}y_n^2}, \quad \text{where}\quad \lambda_n=\lambda_0\gamma^{2n}.
 $$
This equation is known in the literature as ``discrete Painlev\'e III equation''. 
 \smallskip
 
 The situation changes dramatically if $\gamma=1$. In this case, the eight points $A_i$, $B_i$ are in a special configuration (associated points), so that there exists a two-dimensional family of quadrics (a net) through them:
 $$
 \mathcal Q_{\lambda_1,\lambda_2}=\big\{X\in\mbP^3: Q_0(X)-\lambda_1 Q_1(X)-\lambda_2 Q_2(X)=0\big\},
 $$
 where 
 $$
 Q_2(X)=X_1X_3+X_2X_4.
 $$ 
 The map $f$ admits two integrals of motion, $J_1(X)=Q_0(X)/Q_1(X)$ and $J_2(X)=Q_2(X)/Q_0(X)$. The first of them means that $f$ maps each $\mathcal Q_\lambda$ to itself, and on each $\mathcal Q_\lambda$, $f$ acts as a QRT map (or, more precisely, as a QRT-root)
    \begin{equation}
 (\widetilde{x},\widetilde{y})=\Big(y, \frac{1}{x}\, \frac{y^2-1}{1-\lambda^{-2}y^2}\Big),
 \end{equation}
 depending on $\lambda$ as a parameter. It has an integral of motion 
 \begin{equation}
 J_2(x,y)=\Big(\frac{x}{y}+\frac{y}{x}\Big)-\Big(\frac{1}{xy}+\lambda^{-2}xy\Big).
 \end{equation}


\section{Example 2: A 3D form of the discrete Painlev\'e I equation after Ercolani et al.}
\label{sect Example 2}

\subsection{Introduction}

We borrow the following example from \cite{Ercolani2021}, where the non-autonomous discrete Painlev\'e I equation (dPI) is transformed into an equivalent 3D autonomous system. They consider the following form of dPI:
\begin{equation}\label{dP1}
u_{n+1}+u_n+u_{n-1}=\frac{\gamma n+c_0}{u_n}+1,
\end{equation}
assuming the genericity condition $\gamma\neq 0$.
Setting $c_n=\gamma n+c_0$ and $v_n=u_{n-1}$, this takes the form of an autonomous birational 3D map (here we drop indices and replace the shift of indices $n\to n+1$ by the tilde symbol):
\begin{equation}\label{dP1 3D}
\begin{pmatrix} u \\ v \\ c\end{pmatrix}\mapsto \begin{pmatrix} \t u \\ \t v \\ \t c\end{pmatrix}=
\begin{pmatrix}-u-v+\dfrac{c}{u}+1 \\ u \\ c+\gamma\end{pmatrix}.
\end{equation}
Next, one performs a birational change of variables (we do not discuss its origin here):
\begin{equation}\label{dP1 change}
x=\frac{u+v-1}{u}, \quad y=\frac{1}{u}\Big(\frac{c}{u}-v\Big), \quad z= \frac{1}{u},
\end{equation}
so that
\begin{equation}\label{dP1 change inv}
u=\frac{1}{z}, \quad v=\frac{x+z-1}{z}, \quad c=\frac{x+y+z-1}{z^2}.
\end{equation}
Then one easily computes map \eqref{dP1 3D} in new variables:
\begin{equation}\label{dP1 3D Erc}
\begin{pmatrix} x \\ y \\ z\end{pmatrix}\mapsto \begin{pmatrix} \t x \\ \t y \\ \t z\end{pmatrix}=
\begin{pmatrix} \dfrac{y}{y+z-1} \\ \dfrac{x+\gamma z^2}{(y+z-1)^2}  \\ \dfrac{z}{y+z-1}\end{pmatrix}.
\end{equation}
Thus, in homogeneous coordinates we come to the birational map $f:\mbP^3\dasharrow\mbP^3$ of degree 2, 
\begin{equation}\label{dP1 f}
 f: \left[\begin{array}{c} x\\y\\z\\w\end{array}\right]\mapsto \left[\begin{array}{c} y(y+z-w) \\ xw+\gamma z^2 \\ z(y+z-w)\\(y+z-w)^2\end{array}\right],
\end{equation}
which we study in the current section. The inverse map is given by
\begin{equation}\label{dP1 f inv}
 f^{-1}: \left[\begin{array}{c} x\\y\\z\\w\end{array}\right]\mapsto\left[\begin{array}{c} yw-\gamma z^2 \\ x(x+z-w) \\ z(x+z-w)\\(x+z-w)^2\end{array}\right].
\end{equation}

The indeterminacy set of $f$ is the plane conic:
\begin{equation}\label{I(f)}
\mathcal I(f)=\{y+z-w=0,\;xw+\gamma z^2=0\},
\end{equation}
and similarly for the inverse map:
\begin{equation}\label{I(f inv)}
\mathcal I(f^{-1})=\{x+z-w=0,\; yw-\gamma z^2=0\}.
\end{equation}
The critical sets of the map $f$ and $f^{-1}$ are pairs of planes:
\begin{equation}\label{C(f)}
\mathcal C(f)=\{w=0\}\cup \{y+z-w=0\},
\end{equation}
and 
\begin{equation}\label{C(f inv)}
\mathcal C(f^{-1})=\{w=0\}\cup \{x+z-w=0\}.
\end{equation}

The fate of the two critical components of $\mathcal C(f)$ is different. The plane $\{w=0\}$ is not degree lowering: it is blown down to the plane curve $\mathcal I(f^{-1})$, which becomes under further iterations an increasingly complicated curve, not being part of $\mathcal I(f)$. The plane $\{y+z-w=0\}$, on the other hand, is degree lowering:
\begin{equation}
f: \quad \{y+z-w=0\} \;\rightarrow \; p_1 \;\rightarrow \; p_2\;\rightarrow \; p_3 \in \mathcal I(f),
\end{equation}
where
\begin{equation}
p_1=[0:1:0:0], \quad p_2=[1:0:0:1], \quad p_3=[0:1:0:1].
\end{equation}

We will regularize this map following the prescription from \cite{Alonso_Suris_Wei_2D}, via blowing up the orbit of $p_1$, and then we will compute the degrees of $f^n$ with the help of the local indices of polynomials at the relevant blow-ups.  

\subsection{Regularization of the map $f$ by blow-ups}
\label{sect Example2 blow-up 1}

\paragraph{Action of $f$ on the critical plane $\{y+z-w=0\}$.} Consider the fate of the critical plane $\{y+z-w=0\}$ under $f$. It is contracted to the point $p_1=[0:1:0:0]$. We can resolve this with two steps of blowing ups:

\begin{itemize}
    \item At the first step, we blow up the point $p_1$:
\begin{equation}\label{dP1 pi1}
\pi_1: \; \left\{\begin{aligned}
x&=u_1,\\y&=1,\\z&=u_1v_1,\\w&=u_1w_1.\end{aligned}\right.
\end{equation}
The exceptional divisor of $\pi_1$ is given by $E_1=\{u_1=0\}$. In the coordinates \eqref{dP1 pi1} for the image $\t x:\t y:\t z:\t w$, we compute:
$$
\t v_1=\frac{\t z}{\t x}=\frac{z}{y}, \quad \t w_1=\frac{\t w}{\t x}=\frac{y+z-w}{y}.
$$
It follows that the image of (the proper transform of) the plane $\{y+z-w=0\}$ in the exceptional divisor is the curve  $\{w_1=0\}\subset E_1$.

   \item At the second step, we blow up this curve $\{w_1=0\}$ in $E_1$:
\begin{equation}\label{dP1 pi2}
\pi_2:\; \left\{\begin{aligned} u_1&=u_2, \\ v_1&=v_2, \\w_1&=w_2u_2,  \end{aligned}\right. \quad \Rightarrow\quad 
\phi_2=\pi_1\circ \pi_2: \;\left\{\begin{aligned} x&=u_2, \\ y&=1,\\ z&=u_2v_2, \\ w&=u_2^2w_2. \end{aligned}\right. 
\end{equation}
The exceptional divisor of $\pi_2$ is given by $E_2=\{u_2=0\}$. In the coordinates \eqref{dP1 pi2} for the image $\t x:\t y:\t z:\t w$, we compute:
 $$
\t v_2=\frac{\t z}{\t x}=\frac{z}{y}, \quad \t w_2=\frac{\t w\t y}{\t x^2}=\frac{xw+\gamma z^2}{y^2}.
$$  
Thus, (the proper transform of) the plane $\{y+z-w=0\}$ is mapped birationally to the exceptional divisor $E_2$:
$$
f|_{\{y+z-w=0\}}: [x:y:z:y+z]\mapsto (0,\t v_2,\t w_2)=\big(0,\frac{z}{y},\frac{x(y+z)+\gamma z^2}{y^2}\big).
$$
\end{itemize}

\paragraph{Action of $f$ on $E_2$.}
Next, we consider the fate of the exceptional divisor $E_2$ under (the lifts of) $f$. The lift of $f$ by $\phi_2$ contracts $E_2$ to the point $p_2=[1:0:0:1]$.
To resolve this, we will have to perform a sequence of three blow-ups at $p_2$.

\begin{itemize}

\item At the first step, we blow up the point $p_2$: 
\begin{equation}\label{dP1 pi3}
  \pi_3:\;  \left\{\begin{aligned} x&=1,\\ y&=u_3v_3, \\ z&=u_3, \\ w&=1+u_3w_3.\end{aligned}\right.
\end{equation}
The exceptional divisor of $\pi_3$ is given by $E_3=\{u_3=0\}$. A straightforward computation shows that the lift of $f$ by $\pi_3$ sends $E_2$ to the point $(u_3,v_3,w_3)=(0,0,1)$ in $E_3$.

\item At the second step, we blow up the point $(u_3,v_3,w_3)=(0,0,1)$ in $E_3$:
\begin{equation}\label{dP1 pi4}
  \pi_4:\;     \left\{\begin{aligned} u_3&=u_4,\\ v_3&=u_4v_4, \\ w_3&=1+u_4w_4,  \end{aligned}\right. \quad \Rightarrow \quad 
  \pi_3\circ \pi_4:\;     \left\{\begin{aligned} x&=1,\\ y&=u_4^2v_4, \\ z&=u_4, \\ w&=1+u_4+u_4^2w_4.\end{aligned}\right.
\end{equation}
The exceptional divisor of $\pi_4$ is given by $E_4=\{u_4=0\}$. We compute that the lift of $f$ by $\pi_3\circ \pi_4$ sends $E_2$ to the curve $\{v_4=\gamma\}$ in $E_4$.
    
\item At the third step, we blow up the curve $\{v_4=\gamma\}$ in $E_4$:
\begin{equation}\label{dP1 pi5}
 \pi_5:\;  \left\{\begin{aligned} u_4&=u_5,\\ v_4&=\gamma+u_5v_5, \\ w_4&=w_5,  \end{aligned}\right. \quad \Rightarrow \quad 
 \phi_5=\pi_3\circ\pi_4\circ\pi_5:\;    \left\{\begin{aligned} x&=1,\\ y&=\gamma u_5^2+u_5^3v_5, \\ z&=u_5, \\ w&=1+u_5+u_5^2w_5.\end{aligned}\right.
\end{equation}
The exceptional divisor of $\pi_5$ is given by $E_5=\{u_5=0\}$. To determine the image of the exceptional divisor $E_2$ under the lift of $f$ by $\phi_5$, we compute:
$$
  \t v_5=\left.\frac{\t x(\t x\widetilde{y}-\gamma \widetilde{z}^2)}{\widetilde{z}^3}\right|_{u_2=0}=\frac{w_2}{v_2^3}-\gamma, \quad 
  \t w_5=\left.\frac{\t x(\widetilde{w}-\t x-\widetilde{z})}{\widetilde{z}^2}\right|_{u_2=0}=-\frac{w_2}{v_2^2},
$$
which gives a birational map of $E_2$ to $E_5$. 

\end{itemize}

\paragraph{Action of $f$ on $E_5$.} Next, we consider the fate of the exceptional divisor $E_5$ under (the lifts of) $f$. The lift of $f$ by $\phi_5$ contracts $E_5$ to the point $p_3=[0:1:0:1]$. To resolve this, we will have to perform a sequence of three blow-ups at $p_3$.

\begin{itemize} 

\item At the first step, we blow up the point $p_3$:
\begin{equation}\label{dP1 pi6}
  \pi_6:\; \left\{\begin{aligned} x&=u_6v_6,\\ y&=1+u_6w_6, \\z&=u_6, \\ w&=1.\end{aligned}\right.
\end{equation}
The exceptional divisor of $\pi_6$ is given by $E_6=\{u_6=0\}$. The lift of $f$ by $\pi_6$ sends $E_5$ to the point $(u_6,v_6,w_6)=(0,0,-1)$ in $E_6$.
     
\item At the second step, we blow up the point  $(u_6,v_6,w_6)=(0,0,-1)$ in $E_6$:
\begin{equation}\label{dP1 pi7}
   \pi_7:\; \left\{\begin{aligned} u_6&=u_7,\\ v_6&=u_7v_7, \\ w_6&=-1+u_7w_7,  \end{aligned}\right. \quad \Rightarrow \quad 
   \pi_6\circ\pi_7:\;  \left\{\begin{aligned} x&=u_7^2v_7,\\ y&=1-u_7+u_7^2w_7, \\ z&=u_7, \\ w&=1. \end{aligned}\right.
\end{equation}
The exceptional divisor of $\pi_7$ is given by $E_7=\{u_7=0\}$.  The lift of $f$ by $\pi_6\circ\pi_7$ sends $E_5$ to the curve $\{v_7=-\gamma\}$ in $E_7$.

\item At the third step, we blow up the curve $\{v_7=-\gamma\}$ in $E_7$:
\begin{equation}\label{dP1 pi8}
  \pi_8:\;  \left\{\begin{aligned} u_7&=u_8, \\ v_7&=-\gamma+v_8u_8,\\ w_7&=w_8,  \end{aligned}\right. \quad \Rightarrow \quad 
  \phi_8=\pi_6\circ\pi_7\circ\pi_8:\;  \left\{\begin{aligned} x&=-\gamma u_8^2+u_8^3v_8,\\ y&=1-u_8+u_8^2w_8, \\ z&=u_8, \\ w&=1. \end{aligned}\right.
\end{equation}
The exceptional divisor of $\pi_8$ is given by $E_8=\{u_8=0\}$. To determine the image of $E_5=\{u_5=0\}$ in $E_8$ under the lift of $f$ by $\phi_8$, we compute:
$$
    \t v_8=\left.\frac{(\widetilde{x}\t w+\gamma \widetilde{z}^2)\t w}{\widetilde{z}^3}\right|_{u_5=0}=v_5, \quad 
    \t w_8=\left.\frac{(\widetilde{y}-\t w+\widetilde{z})\t w}{\widetilde{z}^2}\right|_{u_5=0}=-w_5+3\gamma.
$$
This gives a birational map of $E_5$ to $E_8$. 
\end{itemize}

\paragraph{Action of $f$ on $E_8$.} We compute straightforwardly that the lift of $f$ by $\phi_8$ contracts $E_8\setminus \{w_8=0\}$ to the point $p_4=[1:0:0:0]$. Indeed, substituting formulas \eqref{dP1 pi8} for $(x,y,z,w)$ into \eqref{dP1 f}, we find: $[\t x:\t y:\t z:\t w]=[w_8(1-u_8+u_8^2w_8):u_8v_8:u_8w_8:u_8^2w_8^2]$, which turns into $[w_8:0:0:0]$ at $u_8=0$. To resolve this, we will perform a sequence of two blow-ups at $p_4$.

\begin{itemize}
    
\item At the first step, we blow up the point $[1:0:0:0]$:
\begin{equation}\label{dP1 pi9}
 \pi_9:\;   \left\{\begin{aligned} x&=1,\\ y&=u_9v_9,\\ z&=u_9,\\ w&=u_9w_9.\end{aligned}\right.
\end{equation}
The exceptional divisor of $\pi_9$ is given by $E_9=\{u_9=0\}$.  The lift of $f$ by $\pi_9$ sends $E_8$ to the curve $\{w_9=0\}$ in $E_9$.
    
\item At the second step, we blow up the curve $\{w_9=0\}$ in $E_9$:
\begin{equation}\label{dP1 pi10}
 \pi_{10}:\;    \left\{\begin{aligned} u_9&=u_{10}, \\ v_9&=v_{10}, \\ w_9&=w_{10}u_{10}, \end{aligned}\right. \quad \Rightarrow \quad 
 \phi_{10}=\pi_8\circ\pi_9\circ\pi_{10}:\;  \left\{\begin{aligned} x&=1,\\ y&=u_{10}v_{10}, \\ z&=u_{10}, \\ w&=u_{10}^2w_{10}. \end{aligned}\right.    
\end{equation}
The exceptional divisor of $\pi_{10}$ is given by $E_{10}=\{u_{10}=0\}$.   To determine the image of the exceptional divisor $E_8$ in $E_{10}$ under the lift of $f$ by $\phi_{10}$, we compute:
$$
  \t  v_{10}=\left.\frac{\widetilde{y}}{\widetilde{z}}\right|_{u_8=0}=\frac{v_8}{w_8}, \quad 
  \t  w_{10}=\left.\frac{\t x\widetilde{w}}{\widetilde{z}^2}\right|_{u_8=0}=w_8,
    $$
    which gives a birational map of $E_8$ to $E_{10}$. 
\end{itemize}

\paragraph{Action of $f$ on $E_{10}$.}
Near the exceptional divisor $E_{10}$, we have
\begin{equation}\label{ex2 eq B42}
   f :\left[\begin{array}{c}  1\\ u_{10}v_{10} \\  u_{10} \\ u_{10}^2w_{10} \end{array}\right]
   \mapsto \left[\begin{array}{c} u_{10}^2v_{10} (1+v_{10}-u_{10}w_{10})\\ u_{10}^2(w_{10}+\gamma) \\ u_{10}^2 (1+v_{10}-u_{10}w_{10})\\ u_{10}^2(1+v_{10}-u_{10}w_{10})^2 \end{array}\right]
   = \left[\begin{array}{c} v_{10} (1+v_{10}-u_{10}w_{10})\\ w_{10}+\gamma \\ 1+v_{10}-u_{10}w_{10} \\ (1+v_{10}-u_{10}w_{10})^2 \end{array}\right].
\end{equation}
At $u_{10}=0$, we find
$$
\left[\begin{array}{c} v_{10} (1+v_{10}) \\ w_{10}+\gamma \\ 1+v_{10} \\ (1+v_{10})^2 \end{array}\right],
$$
which represents the plane $\{x+z-w=0\}$. Thus, the lift of $f$ by $\phi_{10}$ maps birationally the exceptional divisor $E_{10}$ to (the proper transform of) this plane.

Thus, we regularized the action of $f$ on the critical plane $\{y+z-w=0\}$ by blowing up $\mbP^3$ at the four points $p_1$, $p_2$, $p_3$, $p_4$ via $\phi_2$, $\phi_5$, $\phi_8$ and $\phi_{10}$, respectively. We denote the blow-up variety by $X$ and the lift of $f$ to this variety by $f_X$. The result is summarized on Figure \ref{fig:1}. 
\begin{figure}[h]
        \centering
\tikzset{every picture/.style={line width=0.75pt}} 

\begin{tikzpicture}[x=0.75pt,y=0.75pt,yscale=-1,xscale=1]

\draw   (50,350) -- (100,350) -- (100,400) -- (50,400) -- cycle ;
\draw  [color={rgb, 255:red, 208; green, 2; blue, 27 }  ,draw opacity=1 ][fill={rgb, 255:red, 208; green, 2; blue, 27 }  ,fill opacity=1 ] (273,337.5) .. controls (273,336.12) and (274.12,335) .. (275.5,335) .. controls (276.88,335) and (278,336.12) .. (278,337.5) .. controls (278,338.88) and (276.88,340) .. (275.5,340) .. controls (274.12,340) and (273,338.88) .. (273,337.5) -- cycle ;
\draw   (550,350) -- (600,350) -- (600,400) -- (550,400) -- cycle ;
\draw   (150,300) -- (200,300) -- (200,350) -- (150,350) -- cycle ;
\draw   (250,300) -- (300,300) -- (300,350) -- (250,350) -- cycle ;
\draw   (350,300) -- (400,300) -- (400,350) -- (350,350) -- cycle ;
\draw   (450,300) -- (500,300) -- (500,350) -- (450,350) -- cycle ;
\draw   (150,225) -- (200,225) -- (200,275) -- (150,275) -- cycle ;
\draw   (250,225) -- (300,225) -- (300,275) -- (250,275) -- cycle ;
\draw   (350,225) -- (400,225) -- (400,275) -- (350,275) -- cycle ;
\draw   (450,225) -- (500,225) -- (500,275) -- (450,275) -- cycle ;
\draw   (250,150) -- (300,150) -- (300,200) -- (250,200) -- cycle ;
\draw   (350,150) -- (400,150) -- (400,200) -- (350,200) -- cycle ;
\draw [color={rgb, 255:red, 208; green, 2; blue, 27 }  ,draw opacity=1 ][line width=1.5]    (160,340) -- (190,310) ;
\draw [color={rgb, 255:red, 208; green, 2; blue, 27 }  ,draw opacity=1 ][line width=1.5]    (260,265) -- (290,235) ;
\draw [color={rgb, 255:red, 208; green, 2; blue, 27 }  ,draw opacity=1 ][line width=1.5]    (360,265) -- (390,235) ;
\draw [color={rgb, 255:red, 208; green, 2; blue, 27 }  ,draw opacity=1 ][line width=1.5]    (460,340) -- (490,310) ;
\draw [color={rgb, 255:red, 208; green, 2; blue, 27 }  ,draw opacity=1 ][line width=1.5]    (190,360) ;
\draw  [color={rgb, 255:red, 208; green, 2; blue, 27 }  ,draw opacity=1 ][fill={rgb, 255:red, 208; green, 2; blue, 27 }  ,fill opacity=1 ] (173,387.5) .. controls (173,386.12) and (174.12,385) .. (175.5,385) .. controls (176.88,385) and (178,386.12) .. (178,387.5) .. controls (178,388.88) and (176.88,390) .. (175.5,390) .. controls (174.12,390) and (173,388.88) .. (173,387.5) -- cycle ;
\draw  [dash pattern={on 4.5pt off 4.5pt}]  (75,340) -- (138.74,261.55) ;
\draw [shift={(140,260)}, rotate = 129.09] [color={rgb, 255:red, 0; green, 0; blue, 0 }  ][line width=0.75]    (10.93,-3.29) .. controls (6.95,-1.4) and (3.31,-0.3) .. (0,0) .. controls (3.31,0.3) and (6.95,1.4) .. (10.93,3.29)   ;
\draw  [dash pattern={on 4.5pt off 4.5pt}]  (185,215) -- (238.38,176.18) ;
\draw [shift={(240,175)}, rotate = 143.97] [color={rgb, 255:red, 0; green, 0; blue, 0 }  ][line width=0.75]    (10.93,-3.29) .. controls (6.95,-1.4) and (3.31,-0.3) .. (0,0) .. controls (3.31,0.3) and (6.95,1.4) .. (10.93,3.29)   ;
\draw  [dash pattern={on 4.5pt off 4.5pt}]  (410,175) -- (463.38,213.82) ;
\draw [shift={(465,215)}, rotate = 216.03] [color={rgb, 255:red, 0; green, 0; blue, 0 }  ][line width=0.75]    (10.93,-3.29) .. controls (6.95,-1.4) and (3.31,-0.3) .. (0,0) .. controls (3.31,0.3) and (6.95,1.4) .. (10.93,3.29)   ;
\draw  [dash pattern={on 4.5pt off 4.5pt}]  (310,170) -- (338,170) ;
\draw [shift={(340,170)}, rotate = 180] [color={rgb, 255:red, 0; green, 0; blue, 0 }  ][line width=0.75]    (10.93,-3.29) .. controls (6.95,-1.4) and (3.31,-0.3) .. (0,0) .. controls (3.31,0.3) and (6.95,1.4) .. (10.93,3.29)   ;
\draw  [dash pattern={on 4.5pt off 4.5pt}]  (510,260) -- (573.74,338.45) ;
\draw [shift={(575,340)}, rotate = 230.91] [color={rgb, 255:red, 0; green, 0; blue, 0 }  ][line width=0.75]    (10.93,-3.29) .. controls (6.95,-1.4) and (3.31,-0.3) .. (0,0) .. controls (3.31,0.3) and (6.95,1.4) .. (10.93,3.29)   ;
\draw  [color={rgb, 255:red, 208; green, 2; blue, 27 }  ,draw opacity=1 ][fill={rgb, 255:red, 208; green, 2; blue, 27 }  ,fill opacity=1 ] (273,387.5) .. controls (273,386.12) and (274.12,385) .. (275.5,385) .. controls (276.88,385) and (278,386.12) .. (278,387.5) .. controls (278,388.88) and (276.88,390) .. (275.5,390) .. controls (274.12,390) and (273,388.88) .. (273,387.5) -- cycle ;
\draw  [color={rgb, 255:red, 208; green, 2; blue, 27 }  ,draw opacity=1 ][fill={rgb, 255:red, 208; green, 2; blue, 27 }  ,fill opacity=1 ] (373,387.5) .. controls (373,386.12) and (374.12,385) .. (375.5,385) .. controls (376.88,385) and (378,386.12) .. (378,387.5) .. controls (378,388.88) and (376.88,390) .. (375.5,390) .. controls (374.12,390) and (373,388.88) .. (373,387.5) -- cycle ;
\draw  [color={rgb, 255:red, 208; green, 2; blue, 27 }  ,draw opacity=1 ][fill={rgb, 255:red, 208; green, 2; blue, 27 }  ,fill opacity=1 ] (373,337.5) .. controls (373,336.12) and (374.12,335) .. (375.5,335) .. controls (376.88,335) and (378,336.12) .. (378,337.5) .. controls (378,338.88) and (376.88,340) .. (375.5,340) .. controls (374.12,340) and (373,338.88) .. (373,337.5) -- cycle ;
\draw  [color={rgb, 255:red, 208; green, 2; blue, 27 }  ,draw opacity=1 ][fill={rgb, 255:red, 208; green, 2; blue, 27 }  ,fill opacity=1 ] (473,387.5) .. controls (473,386.12) and (474.12,385) .. (475.5,385) .. controls (476.88,385) and (478,386.12) .. (478,387.5) .. controls (478,388.88) and (476.88,390) .. (475.5,390) .. controls (474.12,390) and (473,388.88) .. (473,387.5) -- cycle ;

\draw (169,403) node [anchor=north west][inner sep=0.75pt]   [align=left] {$\displaystyle p_{1}$};
\draw (269,403) node [anchor=north west][inner sep=0.75pt]   [align=left] {$\displaystyle p_{2}$};
\draw (369,403) node [anchor=north west][inner sep=0.75pt]   [align=left] {$\displaystyle p_{3}$};
\draw (469,403) node [anchor=north west][inner sep=0.75pt]   [align=left] {$\displaystyle p_{4}$};
\draw (27,404) node [anchor=north west][inner sep=0.75pt]    {$\{y+z-w=0\}$};
\draw (527,404) node [anchor=north west][inner sep=0.75pt]    {$\{x+z-w=0\}$};
\draw (153,303) node [anchor=north west][inner sep=0.75pt]    {$E_{1}$};
\draw (153,228) node [anchor=north west][inner sep=0.75pt]    {$E_{2}$};
\draw (254,303) node [anchor=north west][inner sep=0.75pt]    {$E_{3}$};
\draw (253,228) node [anchor=north west][inner sep=0.75pt]    {$E_{4}$};
\draw (253,153) node [anchor=north west][inner sep=0.75pt]    {$E_{5}$};
\draw (353,303) node [anchor=north west][inner sep=0.75pt]    {$E_{6}$};
\draw (353,228) node [anchor=north west][inner sep=0.75pt]    {$E_{7}$};
\draw (353,153) node [anchor=north west][inner sep=0.75pt]    {$E_{8}$};
\draw (452,228) node [anchor=north west][inner sep=0.75pt]    {$E_{10}$};
\draw (453,303) node [anchor=north west][inner sep=0.75pt]    {$E_{9}$};

\end{tikzpicture}
 \caption{Blow-ups for the degree computation via the method of indices of polynomials. Blow-ups maps relate vertically aligned objects. The red color is used for the centers of the corresponding blow-ups (points and curves). For each of the ten blow-ups, the exceptional divisor is placed directly above the corresponding center. Dashed arrows represent birational maps between the top level exceptional divisors. }
        \label{fig:1}

\end{figure}
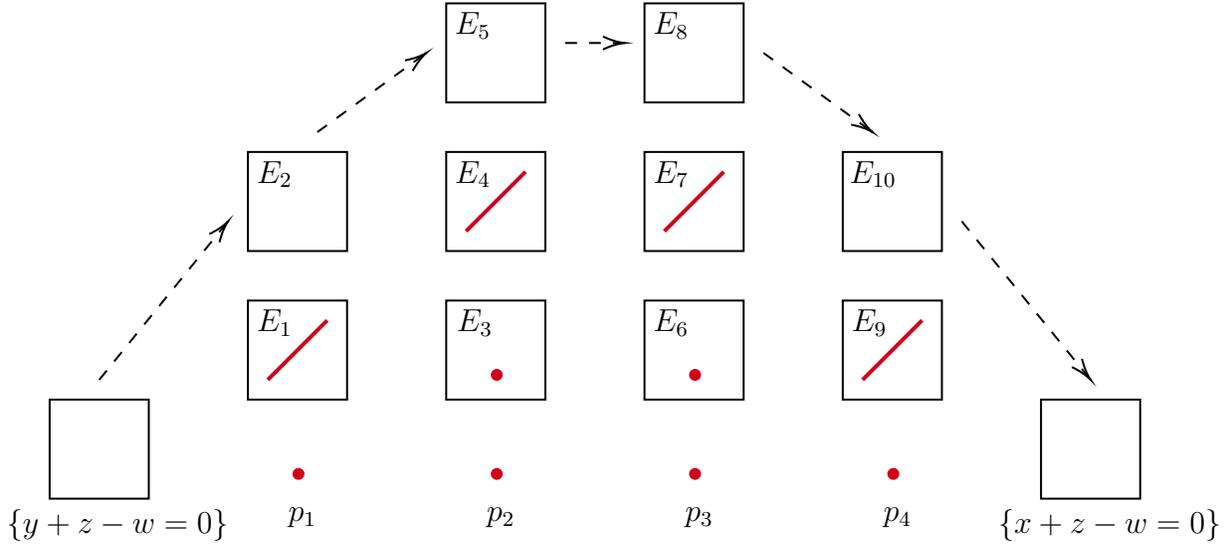

 The lift $f_X$ constructed in this section is sufficient for the degree computations, which will be performed in Section \ref{sect Example 2 degree computation}, but it is not algebraically stable. In fact, $f_X$ blows down the divisor $E_1$ to the curve $\{v_3=0\}\subset E_3$, which is mapped to the curve $\{v_6=0\}\in E_6$, which lies in the singularity set $\mathcal I(f_X)$. Indeed, computing the last three coordinates of $\tilde f\circ \pi_6$, which can be considered as homogeneous coordinates of the image of a neighborhood of $E_6$ in the coordinate patch \eqref{dP1 pi9} around $E_9$, we obtain
$[v_6+\gamma u_6:u_6(w_6+1): u_6(w_6+1)^2]$, which is not defined as a point in $\mbP^2$ for $u_6=v_6=0$.

\subsection{Indices of polynomials with respect to blow-ups and degree computation}
\label{sect Example 2 degree computation}

Now the scene is set for the computation of the factorization of polynomials under the pull-back under $f$, and, as a consequence, for the computation of degrees of iterates of $f$. Since we only have one degree lowering component $\{y+z-w=0\}$, we introduce the following notion.

\begin{de}
Let $P(x,y,z,w)$ be a homogeneous polynomial. The index $\nu_2(P)$ of $P$ with respect to the blow-up map $\phi_2$ is defined by the relation
\begin{equation}\label{dP1 nu 2}
P\circ \phi_2=P(u_2,1,u_2v_2,u_2^2w_2)=u_2^{\nu_2(P)}\tilde{P}(u_2,v_2,w_2), \quad \tilde{P}(0,v_2,w_2)\not\equiv 0.
\end{equation}
\end{de}
The following result is a specialization of Theorem 4.5 from \cite{Alonso_Suris_Wei_2D}.
\begin{pro}\label{ex2 lemma1}
For a homogeneous polynomial $P$, its pull back $f^*P=P\circ f$ is divisible by $(y+z-w)^{\nu_2(P)}$ and by no higher degree of $y+z-w$.
\end{pro}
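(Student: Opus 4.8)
The plan is to compute $f^*P = P\circ\tilde f$ directly from the lift
$$
\tilde f = \big(yK,\; M,\; zK,\; K^2\big),\qquad K:=y+z-w,\quad M:=xw+\gamma z^2,
$$
and to extract the exact power of $K$ by matching the weights already built into $\phi_2$. Writing $P=\sum_{|\alpha|=d}c_\alpha X_1^{\alpha_1}X_2^{\alpha_2}X_3^{\alpha_3}X_4^{\alpha_4}$ with $d=\deg P$, each monomial pulls back under $\tilde f$ to
$$
c_\alpha\,y^{\alpha_1}z^{\alpha_3}M^{\alpha_2}\,K^{\alpha_1+\alpha_3+2\alpha_4},
$$
so the power of $K$ contributed by $X^\alpha$ is exactly $\alpha_1+\alpha_3+2\alpha_4$. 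First I would observe that this is precisely the power of $u_2$ produced by the same monomial under $\phi_2$, since $\phi_2$ substitutes $(X_1,X_2,X_3,X_4)=(u_2,1,u_2v_2,u_2^2w_2)$, carrying the identical weight vector $(1,0,1,2)$. Hence the minimum of $\alpha_1+\alpha_3+2\alpha_4$ over the monomials actually occurring in $P$ coincides with $\nu_2(P)$ by the defining relation \eqref{dP1 nu 2}, and factoring out this minimal power yields $f^*P = K^{\nu_2(P)}\,Q$ with $Q$ a genuine polynomial. This already shows divisibility by $(y+z-w)^{\nu_2(P)}$.

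It then remains to prove the sharpness, i.e.\ $K\nmid Q$, equivalently $Q|_{K=0}\not\equiv 0$; this is the crux. I would compute $Q|_{K=0}$ by retaining only the monomials with $\alpha_1+\alpha_3+2\alpha_4=\nu_2(P)$ and setting $w=y+z$, so that $M|_{K=0}=x(y+z)+\gamma z^2$. Using $\alpha_1=\nu_2(P)-\alpha_3-2\alpha_4$ and $\alpha_2=d-\nu_2(P)+\alpha_4$ for these indices, a short rearrangement factors out $y^{\nu_2(P)}M^{\,d-\nu_2(P)}$ and identifies the remainder with the restriction
$$
\tilde P\big(0,\,z/y,\,M/y^2\big),\qquad \tilde P(0,v_2,w_2)=\!\!\sum_{\alpha_1+\alpha_3+2\alpha_4=\nu_2(P)}\!\! c_\alpha\,v_2^{\alpha_3}w_2^{\alpha_4},
$$
which is exactly the exceptional-divisor restriction from \eqref{dP1 nu 2} and is not identically zero by the defining property of $\nu_2(P)$.

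The main obstacle — and the only genuinely geometric step — is to conclude that substituting $(v_2,w_2)=\big(z/y,\,(x(y+z)+\gamma z^2)/y^2\big)$ into the nonzero polynomial $\tilde P(0,\cdot,\cdot)$ does not annihilate it. For this I would argue that the substitution map $(x,y,z)\mapsto\big(z/y,\,(x(y+z)+\gamma z^2)/y^2\big)$ is dominant: for fixed generic $y$ the first coordinate fixes $z=v_2y$, after which the second is affine and nonconstant in $x$ (its coefficient $(y+z)/y^2$ being generically nonzero), so every target value is attained. This is precisely the fact, established in the blow-up construction of Section~\ref{sect Example2 blow-up 1}, that $f$ maps the plane $\{y+z-w=0\}$ birationally onto the exceptional divisor $E_2$. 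Since the composition of a nonzero polynomial with a dominant map is a nonzero function, we obtain $Q|_{K=0}\not\equiv0$, hence $K\nmid Q$, and therefore $f^*P$ is divisible by $(y+z-w)^{\nu_2(P)}$ and by no higher power, as claimed.
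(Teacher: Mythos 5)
Your proof is correct and takes essentially the same route as the paper's: both identify $f^*P$ with $(xw+\gamma z^2)^{\deg P}\,P(u_2,1,u_2v_2,u_2^2w_2)$ for the rational functions $u_2=y(y+z-w)/(xw+\gamma z^2)$, $v_2=z/y$, $w_2=(xw+\gamma z^2)/y^2$, and then invoke the defining relation for $\nu_2$ to extract the exact power of $y+z-w$. Your monomial-by-monomial bookkeeping and the explicit dominance argument for the sharpness step merely spell out what the paper's proof treats as manifest.
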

\begin{proof}
\begin{eqnarray*}
(f^*P)(x,y,z,w) & = & P\big(y(y+z-w),xw+\gamma z^2,z(y+z-w),(y+z-w)^2\big)\\
 & = & (xw+\gamma z^2)^d P(u_2,1,u_2v_2,u_2^2w_2),
\end{eqnarray*}
where
$$
u_2=\frac{y(y+z-w)}{xw+\gamma z^2}, \quad u_2v_2=\frac{z(y+z-w)}{xw+\gamma z^2}, \quad u_2^2w_2=\frac{(y+z-w)^2}{xw+\gamma z^2},
$$
so that
$$
v_2=\frac{z}{y}, \quad w_2=\frac{xw+\gamma z^2}{y^2}.
$$
Thus,
$$
(f^*P)(x,y,z,w)=(xw+\gamma z^2)^{d-\nu_2}\big(y(y+z-w)\big)^{\nu_2} \widetilde P\left(\frac{y(y+z-w)}{xw+\gamma z^2},\frac{z}{y}, \frac{xw+\gamma z^2}{y^2}\right).
$$
The polynomial on the right-hand side is manifestly divisible by $(y+z-w)^{\nu_2}$ and by no higher degree of $y+z-w$.
\end{proof}

Next, we study the behavior of a given polynomial $P=P_0$ under iterates of the pull-back $f^*$. We define recursively
\begin{equation}\label{ex2 P n+1}
P_{n+1}=(y+z-w)^{-\nu_2(P_n)}f^*P_n,
\end{equation} 
so that $P_{n+1}$ is the proper pull-back of $P_n$. We obtain recurrent relations for $\deg(n)$ and indices of $P_n$ with respect to the blow-up map $\phi_2$, as well as with respect to further blow-up maps $\phi_i$, $i=5,8,10$. The latter indices are defined in the standard way, by
$$
P\circ \phi_i=P(\phi_i(u_i,v_i,w_i))=u_i^{\nu_i(P)}\tilde{P}(u_i,v_i,w_i), \quad \tilde{P}(0,v_i,w_i)\not\equiv 0.
$$

\begin{theo}\label{ex2 Th degrees}
The degrees $d(n)=\deg(P_n)$ and indices $\nu_i(n)=\nu_i(P_n)$, $i=2,5,8,10$, satisfy the following system of recurrent relations:
\begin{align}
     d(n+1)&=2d(n)-\nu_2(n), \label{ex2 d n+1}\\
     \nu_2(n+1)&=\nu_5(n), \label{ex2 mu1 2}\\
     \nu_5(n+1)&=\nu_8(n),  \label{ex2 mu2 2}\\
     \nu_8(n+1)&=2 d(n)-2\nu_2(n)+\nu_{10}(n) , \label{ex2 mu3 2}\\
     \nu_{10}(n+1)&=2 d(n)-\nu_2(n). \label{ex2 mu4 2}
\end{align}
\end{theo}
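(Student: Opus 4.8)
The plan is to follow the two-step scheme of Step 3 from \cite{Alonso_Suris_Wei_2D}. The degree relation \eqref{ex2 d n+1} is the easy part: by definition \eqref{ex2 P n+1} we have $P_{n+1}=(y+z-w)^{-\nu_2(P_n)}f^*P_n$, and since $f$ has degree $2$ we get $\deg(f^*P_n)=2d(n)$; subtracting the degree $\nu_2(n)$ of the removed factor gives $d(n+1)=2d(n)-\nu_2(n)$. (No power of the other critical generator $w$ is removed, because $\{w=0\}$ is not degree lowering and the four components of $\tilde f$ in \eqref{dP1 f} share no factor $w$.) For the four index relations I would first record that the local index is additive under products, which specializes \eqref{3D Ex1 nu n+1} to the single degree-lowering generator $y+z-w$:
\[
\nu_i(P_{n+1})=\nu_i(f^*P_n)-\nu_2(P_n)\,\nu_i(y+z-w),\qquad i\in\{2,5,8,10\}.
\]
Everything then reduces, for each $i$, to computing the scalar $\nu_i(y+z-w)$ and the transferred index $\nu_i(f^*P)$.

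The scalars are read off by substituting the four blow-up parametrizations $\phi_2$, $\phi_5$, $\phi_8$, $\phi_{10}$ into $y+z-w$ and recording the order in $u_i$. I expect $(y+z-w)\circ\phi_2=1+u_2v_2-u_2^2w_2$ and $(y+z-w)\circ\phi_5=-1+O(u_5^2)$, both nonvanishing at $u_i=0$, so that $\nu_2(y+z-w)=\nu_5(y+z-w)=0$; whereas $(y+z-w)\circ\phi_8=u_8^2w_8$ and $(y+z-w)\circ\phi_{10}=u_{10}(1+v_{10}-u_{10}w_{10})$ give $\nu_8(y+z-w)=2$ and $\nu_{10}(y+z-w)=1$. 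These four numbers are precisely the coefficients of $\nu_2(n)$ appearing in \eqref{ex2 mu1 2}--\eqref{ex2 mu4 2}.

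The core is the transfer of $\nu_i(f^*P)$ along the chain $\overline{\{y+z-w=0\}}\dasharrow E_2\dasharrow E_5\dasharrow E_8\dasharrow E_{10}\dasharrow\overline{\{x+z-w=0\}}$. For each $i$ I would substitute $\phi_i$ into $\tilde f$ from \eqref{dP1 f}, factor out the maximal common power $u_i^{m_i}$ of the four resulting components, and write $\tilde f\circ\phi_i=u_i^{m_i}g_i$; homogeneity of $P$ of degree $d$ then gives $\nu_i(f^*P)=m_i d+\nu_i^{g_i}(P)$, where $\nu_i^{g_i}(P)$ is the $u_i$-order of $P\circ g_i$. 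I expect the substitutions to yield $m_2=m_5=0$ and $m_8=m_{10}=2$. For $i\in\{2,5,8\}$ the value $g_i|_{u_i=0}$ is the point $p_2,p_3,p_4$ serving as the centre of the next blow-up $\phi_j$ with $j\in\{5,8,10\}$, so that $g_i$ factors as $\phi_j\circ\Psi$ for a local map $\Psi$; the decisive check is that the pulled-back coordinate $u_j\circ\Psi$ vanishes to \emph{exactly} first order in $u_i$ (I expect $u_5=u_2v_2$, $u_8=-u_5+O(u_5^3)$, $u_{10}=u_8+O(u_8^2)$). Since $f_X$ maps $E_i$ onto $E_j$ birationally, $\tilde P\circ\Psi\not\equiv0$ on $\{u_i=0\}$, and the first-order vanishing then forces $\nu_i^{g_i}(P)=\nu_j(P)$. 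This produces $\nu_2(f^*P)=\nu_5(P)$, $\nu_5(f^*P)=\nu_8(P)$ and $\nu_8(f^*P)=2d+\nu_{10}(P)$, which together with the scalars of the previous paragraph give \eqref{ex2 mu1 2}, \eqref{ex2 mu2 2} and \eqref{ex2 mu3 2}.

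The case $i=10$ is where the argument genuinely differs and where I expect the main obstacle. As in \eqref{ex2 eq B42}, the reduced map $g_{10}$ at $u_{10}=0$ sweeps out the entire plane $\{x+z-w=0\}$ rather than a point, so $\nu_{10}^{g_{10}}(P)$ is the order of vanishing of $P$ along that plane, and the desired identity $\nu_{10}(f^*P)=2d$ holds only if this order is zero. The point to be justified is therefore that no $P_n$ is divisible by $x+z-w$: since $x+z-w$ generates a component of $\mathcal C(f^{-1})$ rather than of the degree-lowering chain of $\mathcal C(f)$, the plane $\{x+z-w=0\}$ is not contracted by $f$, so $f^*P$ never acquires $x+z-w$ as a factor, and because \eqref{ex2 P n+1} removes only powers of $y+z-w$, a short induction on $n$ shows $x+z-w\nmid P_n$ for all $n$. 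Combined with $\nu_{10}(y+z-w)=1$ this yields \eqref{ex2 mu4 2}. The remaining difficulties are bookkeeping rather than conceptual: verifying each $m_i$ by honest substitution into \eqref{dP1 f}, and confirming the \emph{exact} first-order (not merely positive-order) vanishing of each $u_j$, since a higher-order vanishing would rescale the transferred index and corrupt the recurrence.
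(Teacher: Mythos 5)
Your overall strategy is the same as the paper's: derive the degree relation from \eqref{ex2 P n+1}, use additivity of the local index to reduce everything to $\nu_i(f^*P)$ and $\nu_i(y+z-w)$, and compute both by substituting the charts $\phi_2,\phi_5,\phi_8,\phi_{10}$ into $\tilde f$. All of your concrete predictions check out against the paper's computations: $\nu_2(y+z-w)=\nu_5(y+z-w)=0$, $\nu_8(y+z-w)=2$, $\nu_{10}(y+z-w)=1$; $m_2=m_5=0$, $m_8=m_{10}=2$; and the pulled-back local coordinates $u_5=u_2v_2$, $u_8=-u_5+O(u_5^3)$, $u_{10}=u_8+O(u_8^2)$ all vanish to exactly first order, which is indeed the point that makes the index transfer clean. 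Your treatment is in fact slightly more systematic than the paper's (the paper handles \eqref{ex2 mu1 2}, \eqref{ex2 mu2 2} by a soft birationality remark and only does the explicit chart computation for $i=8,10$).

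The one genuine flaw is in your justification of the $i=10$ case. You correctly identify that $\nu_{10}(f^*P_n)=2d(n)$ requires $P_n$ not to vanish identically on the plane $\{x+z-w=0\}$ (the paper silently assumes this with its ``up to factor which does not vanish identically at $u_{10}=0$''). But your argument for it does not work: the claim ``the plane $\{x+z-w=0\}$ is not contracted by $f$, so $f^*P$ never acquires $x+z-w$ as a factor'' is false. Since the plane is not contracted, $f^*P$ is divisible by $x+z-w$ exactly when $P$ vanishes on the image surface $S=\overline{f(\{x+z-w=0\})}$, which is a nontrivial quadric and can perfectly well be a component of $\{P=0\}$. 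Consequently your induction does not close: knowing $x+z-w\nmid P_n$ tells you nothing about whether $P_n$ vanishes on $S$, which is what controls divisibility of $f^*P_n$. The statement genuinely needs a genericity hypothesis on $P_0$ rather than a proof valid for all $P_0$: for $P_0=x+z-w$ one has $f^*P_0=(y+z-w)w$, hence $P_1=w$ and $\nu_{10}(P_1)=2$, whereas \eqref{ex2 mu4 2} would predict $2d(0)-\nu_2(0)=1$. So the correct repair is to assume (as the paper implicitly does, and as holds for a generic linear $P_0$) that no $P_n$ vanishes identically on $\{x+z-w=0\}$, rather than to derive this from the geometry of $\mathcal C(f^{-1})$.
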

\begin{proof} \mbox{}
\begin{itemize}
	\item Eq.\ \eqref{ex2 d n+1} follows directly from \eqref{ex2 P n+1}.
	\item Eqs.\ \eqref{ex2 mu1 2}, \eqref{ex2 mu2 2} hold true, since $f_X$ acts birationally in the neighborhood of the exceptional divisors $E_2$ and $E_5$ over the points $p_1=[0:1:0:0]$ and $p_2=[1:0:0:1]$, and the factor $y+z-w$ does not vanish at these points.
	\item For eq.\ \eqref{ex2 mu3 2}, we first prove that 
\begin{equation}\label{ex2 mu3 1}
\nu_8(f^*P_n)=2 d(n)+\nu_{10}(n).
\end{equation}
We compute:
\begin{eqnarray*}
&&(f^*P_n)(-\gamma u_8^2+v_8u_8^3,1-u_8+w_8u_8^2,u_8,1)  \\
&&\quad=P_n\big(w_8u_8^2(1-u_8+w_8u_8^2), v_8u_8^3, w_8u_8^3, w_8^2u_8^4\big)\\
 &&\quad=(u_8^2w_8(1-u_8+w_8u_8^2))^{d(P)}P_n(1,v_{10}u_{10}, u_{10}, w_{10}u_{10}^2)\\
 &&\quad\sim u_8^{2d(n)}P_n(1,v_{10}u_{10}, u_{10}, w_{10}u_{10}^2),
\end{eqnarray*}
where
$$
u_{10}=\frac{u_8}{1-u_8+w_8u_8^2}, \quad v_{10}u_{10}=\frac{(v_8/w_8)u_8}{1-u_8+w_8u_8^2}, \quad 
w_{10}u_{10}^2=\frac{w_8u_8^2}{1-u_8+w_8u_8^2},
$$
so that
$$
  v_{10}=\frac{v_8}{w_8}, \quad w_{10}=w_8(1-u_8+w_8u_8^2).
$$
Up to factor which does not vanish identically at $u_8=0$, the right-hand side behaves like $u_8^{2d(n)}u_{10}^{\nu_{10}(n)}\sim u_8^{2d(n)+\nu_{10}(n)}$. This proves \eqref{ex2 mu3 1}. Now \eqref{ex2 mu3 2} follows from \eqref{ex2 P n+1} by observing that $\nu_8(y+z-w)=2$.

\item Finally, for eq.\ \eqref{ex2 mu4 2}, we start with
\begin{equation}\label{ex2 mu4 1}
\nu_{10}(f^*P_n)=2 d(n). 
\end{equation}
For this, we compute:
\begin{small}
\begin{eqnarray*}
&&(f^*P_n)(1,v_{10}u_{10}, u_{10}, w_{10}u_{10}^2)  \\
 &&\quad\sim u_{10}^{2d(n)}P_n\big(v_{10} (1+v_{10}-u_{10}w_{10}),w_{10}+\gamma, 1+v_{10}-u_{10}w_{10},(1+v_{10}-u_{10}w_{10})^2\big).
\end{eqnarray*}
\end{small}Up to factor which does not vanish identically at $u_{10}=0$ (and represents the values of $P_n$ near the plane $\{x+z-w=0\}$), the right-hand side behaves like $u_{10}^{2d(n)}$. This proves \eqref{ex2 mu4 1}. Eq. \eqref{ex2 mu4 2} follows from \eqref{ex2 P n+1} by observing that $\nu_{10}(y+z-w)=1$.
\end{itemize}
\end{proof}

\begin{co}
For any polynomial P, the degrees of the polynomials $P_n$ satisfy the following recurrence relation:
\begin{equation}\label{ex2 d recur}
    d(n+1)=2d(n)-2d(n-2)+d(n-3).
\end{equation}
\end{co}
\begin{proof}
Comparing \eqref{ex2 d n+1} with \eqref{ex2 mu4 2}, we see that
\begin{equation}\label{ex2 mu4 = d}
\nu_{10}(n)=d(n), \quad n\ge 1.
\end{equation}
This allows us to re-write \eqref{ex2 mu3 2} as 
\begin{equation}\label{ex2 mu3 3}
\nu_8(n+1)=3 d(n)-2\nu_2(n), \quad n\ge 1.
\end{equation}
From \eqref{ex2 mu1 2}--\eqref{ex2 mu4 2}, \eqref{ex2 mu3 3} and \eqref{ex2 d n+1} we derive:
\begin{eqnarray*}
\nu_2(n)& = &  \nu_5(n-1)= \nu_8(n-2)\\
 & = & 3d(n-3)-2\nu_2(n-3)\\
 & = & 2\big(2d(n-3)-\nu_2(n-3)\big)-d(n-3)\\
 & = & 2d(n-2)-d(n-3).
\end{eqnarray*}
Plugging this into \eqref{ex2 d n+1} finally gives \eqref{ex2 d recur}.
\end{proof}

For a generic polynomial $P_0$ of degree 1 (i.e., a polynomial not passing through any of the points $p_1,\ldots,p_4$), we have:
$$
d(n)=n^2-n+2.
$$
The table of $d(n)$ and $\nu_i(n)$ in this case can be easily computed:
\smallskip

\begin{center}
\begin{tabular}[h]{c|c|cccc}
\hline
$n$ & $d(n)$ & $\nu_2(n)$ & $\nu_5(n)$ & $\nu_8(n)$ & $\nu_{10}(n)$ \\
\hline
0 & 1 & 0 & 0 & 0 & 0\\
1 & 2 & 0 & 0 & 2 & 2 \\
2& 4 & 0 & 2 & 6 & 4 \\
3 & 8 & 2 & 6 & 12 & 8 \\
4 & 14 & 6 & 12 & 20 & 14\\
5 & 22 & 12 & 20 & 30 & 22 \\
6 & 32 & 20 & 30 & 42 & 32 \\
7 & 44 & 30 & 42 & 56 & 44\\
8 & 58 & 42 & 56 & 72 & 58\\
9 & 74 & 56 & 72 & 90  & 74 \\
10 & 92 & 72 & 90 & 110 & 92
\end{tabular}
\end{center}

\subsection{Invariant foliation and integrals of motion}
\label{sect Example 2 inv}

To find invariant foliations of $f$, we propose a method based on indices of polynomials at blow-ups. We formulate the following definition for our current example, but it is of a general character, provided one has a lift $f_X$ resolving the contraction of all critical components of $f$, and the indices at corresponding blow-ups. Recall that a polynomial $P$ is called a Darboux polynomial \cite{Celledoni4}, \cite{Gasull2002}, \cite{Goriely2001}, if $f^*P$ is divisible by $P$, which in our case means simply $f^*P=(y+z-w)^{\nu_2(P)}P$, or, in other words, if $\widehat P=P$, where $\widehat{P}:=(y+z-w)^{-\nu_2(P)}f^*P$ is the proper pull-back of $P$. We introduce a somewhat weaker notion. If $f_X$ is algebraically stable, we define a \emph{generalized Darboux polynomial} by the property that the divisor class of $\{P=0\}$ in ${\rm Pic}(X)$ is fixed under the action of $f^*_X$. If $f_X$ is not algebraically stable, like in the present case, the following definition will serve as a replacement.

\begin{de}
We say that a polynomial $P$ is a \emph{generalized Darboux polynomial} for the map $f$ if its proper pull-back $\widehat{P}:=(y+z-w)^{-\nu_2(P)}f^*P$ satisfies 
\begin{equation}\label{ex2 inv foliation}
d(\widehat{P})=d(P) \quad \text{and}\quad \nu_i(\widehat{P})=\nu_i(P), \quad i=2,5,8,10. 
\end{equation}
\end{de}
Obviously, generalized Darboux polynomials (of a given degree) form a vector space. The computation of generalized Darboux polynomials in the present case is facilitated by observing that, by Theorem \ref{ex2 Th degrees}, condition \eqref{ex2 inv foliation} implies $\nu_i(P)=d(P)$ for $i=2,5,8,10$.

For $d(P)=1$, there is only one such polynomial, $P=z$. For $d(P)=2$, one finds two linearly independent polynomials,
$Q_0=w(x+y+z-w)$ and $Q_\infty=z^2$, with $f^*Q_0=(y+z-w)^2(Q_0+\gamma Q_\infty)$ and $f^*Q_\infty=(y+z-w)^2Q_\infty$. In other words, we have a
pencil of quadrics which are generalized Darboux polynomials, $Q_\lambda=w(x+y+z-w)+\lambda z^2$, $\lambda\in\mbC$, with 
$$
f^*Q_\lambda=(y+z-w)^2 Q_{\lambda+\gamma}.
$$ 
The foliation of $\mbP^3$ by the quadrics $\{Q_\lambda=0\}$ is preserved under $f$, but no quadric except $Q_\infty$ is preserved individually, unless $\gamma=0$. 

For $\gamma=0$ (an autonomous version of dPI), however, not only are the quadrics $\{Q_\lambda=0\}$ preserved individually, but the dynamics changes drastically also in other respects. Both indeterminacy sets $\mathcal I(f)$ and $\mathcal I(f^{-1})$ degenerate into pairs of coplanar lines:
$$
\mathcal I(f)=\ell_1\cup \ell_2,\;\;\text{where} \;\; \ell_1=\{y+z-w=0\}\cap \{x=0\}, \;\; \ell_2=\{y+z-w=0\}\cap \{w=0\},
$$ 
$$
\mathcal I(f^{-1})=\ell_3\cup \ell_4,\;\; \text{where}\;\; \ell_3=\{x+z-w=0\}\cap \{y=0\}, \;\; \ell_4=\{x+z-w=0\}\cap \{w=0\}.
$$

In this case, the plane $\{w=0\}$ is also degree lowering. We have the following degree lowering sequence:
$$
\{w=0\}\;\;\to \;\;\ell_3 \;\; \to \;\; \ell_1 \;\;\to\;\; \{w=0\}.
$$
So, this case does not fit our standing assumption that all degree lowering surfaces are contracted to points. The whole application of our algorithm has to be reworked. On this way, we find a second pencil of quadratic polynomials such that the degrees and the relevant indices for the proper transforms $\widehat P$ coincide with those for $P$, namely 
$\{P_\mu:=z(x+y+z-w)+\mu xy\}$, for which 
$$
f^*P_\mu=(y+z-w)wP_\mu.
$$
Thus, for $\gamma=0$, the map $f$ has two integrals of motion, $\lambda=-w(x+y+z-w)/z^2$ and $\mu=-z(x+y+z-w)/(xy)$. Note that, according to \eqref{dP1 change inv}, in the original variables $(u,v,c)$ of the dPI equation, the integral $\lambda$ corresponds to the variable $c$ (constant $c_0$ in the case $\gamma=0$), while the integral $\mu$ corresponds to the integral $(u+v-1)(c_0-uv)$ of the map, that is, to the integral $(u_n+u_{n-1}-1)(c_0-u_nu_{n-1})$ of the difference dPI equation \eqref{dP1}.

\subsection{An algebraically stable lift}

 We remark here that there exists an algebraically stable lift of $f$, which is obtained by a different sequence of blowing ups on $\mbP^3$. 

\begin{enumerate}
    \item    Blow-up at the point $p_1=[0:1:0:0]$ in $\mbP^3$,
    $$
   \pi_1:\; \left\{\begin{aligned} 
     x&=u_1,\\y&=1,\\z&=u_1v_1,\\w&=u_1w_1.\end{aligned}\right.
   $$
   The exceptional divisor of this blow-up is given by $E_1=\{u_1=0\}$.
   \item Blow-up at the curve $\{w_1=0\}$ in $E_1$,
         $$
\pi_2:\,\left\{\begin{aligned} u_1&=u_2, \\ v_1&=v_2, \\w_1&=w_2u_2,  \end{aligned}\right. \quad \Rightarrow\quad 
\phi_2=\pi_1\circ\pi_2:\,\left\{\begin{aligned} x&=u_2, \\ y&=1,\\ z&=u_2v_2, \\ w&=u_2^2w_2. \end{aligned}\right. 
    $$
   The exceptional divisor of this blow-up is given by $E_2=\{u_2=0\}$.
   \item Blow-up at the point $p_2=[1:0:0:1]$ in $\mbP^3$,
     $$
  \pi_3:\,  \left\{\begin{aligned} x&=1,\\ y&=u_3v_3, \\ z&=u_3, \\ w&=1+u_3w_3.\end{aligned}\right.
    $$
     The exceptional divisor of this blow-up is given by $E_3=\{u_3=0\}$.
  \item  Blow-up at the curve $\{v_3=0\}$ in $E_3$,
           $$
\rho_4:\;\left\{\begin{aligned} u_3&=u_4, \\ v_3&=u_4v_4, \\w_3&=w_4,  \end{aligned}\right. \quad \Rightarrow\quad 
\pi_3\circ\rho_4:\,\left\{\begin{aligned} x&=1, \\ y&=u_4^2v_4,\\ z&=u_4, \\ w&=1+u_4w_4. \end{aligned}\right. 
    $$
   The exceptional divisor of this blow-up is given by $F_4=\{u_4=0\}$.
  \item Blow-up at the point $(u_4,v_4,w_4)=(0,\gamma,1)$ in $F_4$,
             $$
\rho_5:\, \left\{\begin{aligned} u_4&=u_5, \\ v_4&=\gamma+u_5v_5, \\w_4&=1+u_5w_5,  \end{aligned}\right. \quad \Rightarrow\quad 
\psi_5=\pi_3\circ\rho_4\circ\rho_5:\,\left\{\begin{aligned} x&=1, \\ y&=\gamma u_5^2+u_5^3v_5,\\ z&=u_5, \\ w&=1+u_5+u_5^2w_5. \end{aligned}\right. 
    $$
   The exceptional divisor of this blow-up is given by $F_5=\{u_5=0\}$.
   \item Blowing up at the point $p_3=[0:1:0:1]$ in $\mbP^3$,
      $$
    \pi_6:\, \left\{\begin{aligned} x&=u_6v_6,\\ y&=1+u_6w_6, \\z&=u_6, \\ w&=1.\end{aligned}\right.
     $$
     The exceptional divisor of this blow-up is given by $E_6=\{u_6=0\}$.
    \item Blow-up at the curve $\{v_6=0\}$ in $E_6$,
               $$
\rho_7:\,\left\{\begin{aligned} u_6&=u_7, \\ v_6&=u_7v_7, \\w_6&=w_7,  \end{aligned}\right. \quad \Rightarrow\quad 
\pi_6\circ\rho_7:\,\left\{\begin{aligned} x&=u_7^2v_7, \\ y&=1+u_7w_7,\\ z&=u_7, \\ w&=1. \end{aligned}\right. 
    $$
   The exceptional divisor of this blow-up is given by $F_7=\{u_7=0\}$.
   \item Blow-up at the point $(u_7,v_7,w_7)=(0,-\gamma,-1)$ in $F_7$,
                $$
\rho_8:\,\left\{\begin{aligned} u_7&=u_8, \\ v_7&=-\gamma+u_8v_8, \\w_7&=-1+u_8w_8,  \end{aligned}\right. \quad \Rightarrow\quad 
\psi_8=\pi_6\circ\rho_7\circ\rho_8:\,\left\{\begin{aligned} x&=-\gamma u_8^2+u_8^3v_8, \\ y&=1-u_8+u_8^2w_8,\\ z&=u_8, \\ w&=1. \end{aligned}\right. 
    $$
   The exceptional divisor of this blow-up is given by $F_8=\{u_8=0\}$.
   \item Blow-up at the point $p_4=[1:0:0:0]$ in $\mbP^3$,
     $$
    \pi_9:\,\left\{\begin{aligned} x&=1,\\ y&=u_9v_9,\\ z&=u_9,\\ w&=u_9w_9.\end{aligned}\right.
    $$
      The exceptional divisor of this blow-up is given by $E_9=\{u_9=0\}$.
    \item  Blow-up at the curve $\{w_9=0\}$ in $E_9$,
    $$
   \pi_{10}:\,  \left\{\begin{aligned} u_9&=u_{10}, \\ v_9&=v_{10}, \\ w_9&=w_{10}u_{10}, \end{aligned}\right. \quad \Rightarrow \quad 
     \phi_{10}=\pi_9\circ\pi_{10}:\,\left\{\begin{aligned} x&=1,\\ y&=u_{10}v_{10}, \\ z&=u_{10}, \\ w&=u_{10}^2w_{10}. \end{aligned}\right.    
    $$
    The exceptional divisor of this blow-up is given by $E_{10}=\{u_{10}=0\}$.
    \item Blow-up at the line $\{z=w=0\}$ in $\mbP^3$,
         $$
  \psi_{11}:\;  \left\{\begin{aligned} x&=1,\\ y&=v_{11},\\ z&=u_{11},\\ w&=u_{11}w_{11}.\end{aligned}\right.
    $$
      The exceptional divisor of this blow-up is given by $F_{11}=\{u_{11}=0\}$.
\end{enumerate}
We note that the blow-ups $\psi_5$ and $\psi_8$ are different from the blow-ups $\phi_5$ and $\phi_8$ of Section \ref{sect Example2 blow-up 1}, even if they are expressed by identical formulas in the corresponding local affine coordinate patches. The easiest way to see this is to observe that the exceptional divisors $E_5$ and $E_8$ are isomorphic to $\mbP^1\times\mbP^1$ (the top elementary blow-ups are centered at curves), while the exceptional divisors $F_5$ and $F_8$ are isomorphic to $\mbP^2$ (the top elementary blow-ups are centered at points). 

We denote the blowing up variety by $Y$ and the corresponding lifted map by $f_Y$. The action of $f_Y$ on divisors can be summarized as follows, see Figure \ref{fig:2}:
\begin{eqnarray}
    & \overline{\{y+z-w=0\}}\;\dasharrow\; E_2\; \dasharrow \; F_5 \; \dasharrow \; F_8 \; \dasharrow \;  E_{10}\; \dasharrow  \; \overline{\{x+z-w=0\}}, & \\
     & E_1 \; \dasharrow \;  F_4 \; \dasharrow \;  F_7\; \dasharrow \;  E_9 \; \dasharrow \;  E_1, & \qquad \qquad\\
    & E_3\; \dasharrow \;  E_6 \; \dasharrow \;  F_{11} \; \dasharrow \;  E_3. &
\end{eqnarray}

\begin{figure}[htb]
        \centering
        \input{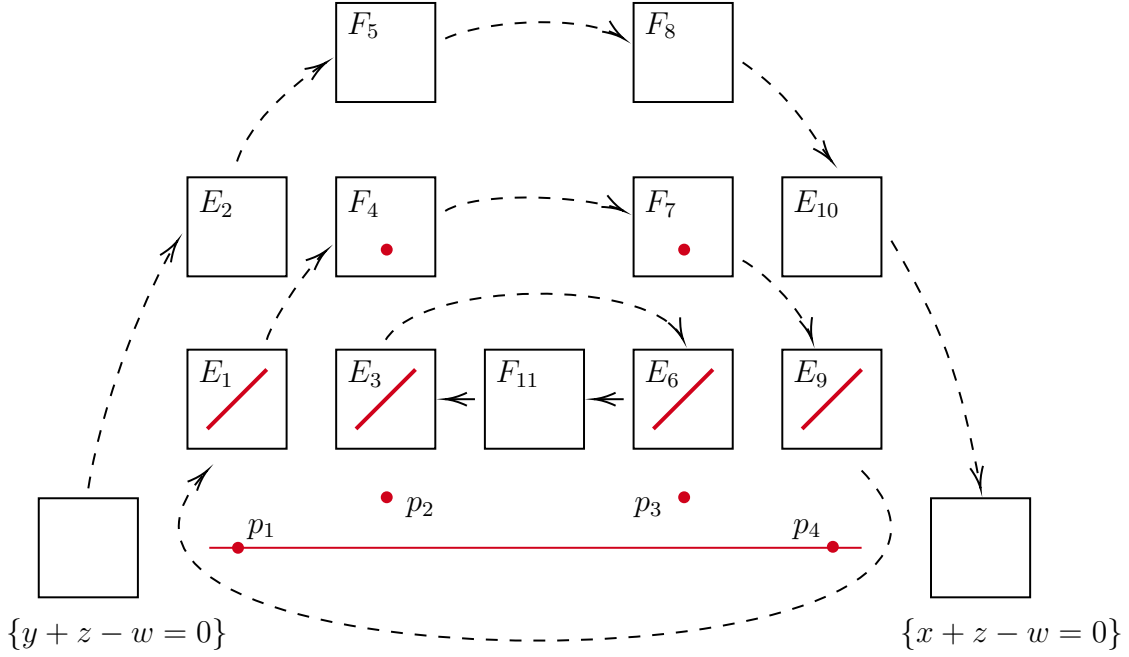}
        \caption{An algebraically stable lift. The same conventions as on Figure \ref{fig:1} are used.}
        \label{fig:2}
\end{figure}

One can easily see that $f_Y$ is algebraically stable, since every exceptional divisor in the diagram has both a birational image and a birational preimage. 

In order to compute the action of $(f_Y)_*$ on the Picard group ${\rm Pic}(Y)$, it is more convenient to use total transforms $\cF_i$ of the blow-up centers rather than proper transforms $E_i$, $F_i$. The relation between the former and the latter is the following:
     $$
    \left\{\begin{aligned} E_1&=\cF_1-\cF_2,\\ E_2&=\cF_2,\\ E_3&=\cF_3-\cF_4,\\ F_4&=\cF_4-\cF_5,
    \\ F_5&=\cF_5, \\ E_6&=\cF_6-\cF_7,\\ F_7&=\cF_7-\cF_8, \\
     F_8&=\cF_8\\ E_9&=\cF_9-\cF_{10},\\ E_{10}&=\cF_{10},\\ F_{11}&=\cF_{11}\end{aligned}\right.
    $$
We determine as follows the divisor classes of the proper transforms  $\overline{\{x+z-w=0\}}$ and  $\overline{\{y+z-w=0\}}$. The plane $\{x+z-w=0\}$ passes through the points $p_1$ and $p_2$. The intersection of $\overline{\{x+z-w=0\}}$ and the exceptional divisor $\cF_1$ is given by $\{1+v_1-w_1=0\}$, different from the curve $\{w_1=0\}$ which is the center of the blow-up $\pi_2$. The intersection of $\overline{\{x+z-w=0\}}$ and the exceptional divisor $\cF_3$ is given by $\{1-w_3=0\}$, different from the curve $\{v_3=0\}$ which is the center of the blow-up $\rho_4$. The intersection of $\overline{\{x+z-w=0\}}$ and the exceptional divisor $\cF_4$ is given by $\{1-w_4=0\}$, which contains the point $(u_4,v_4,w_4)=(0,\gamma,1)$, the center of the blow-up $\rho_5$.  Therefore, divisor class of the proper transform $\overline{\{x+z-w=0\}}$ is $H-\cF_1 -\cF_3 -\cF_5$. 

Similarly, the divisor class of the proper transform $\overline{\{y+z-w=0\}}$ is $H-\cF_6 -\cF_8 -\cF_9$.

With similar arguments one computes the action of $(f_Y)_*$ on ${\rm Pic}(Y)$
with respect to the basis $\{H,\cF_1,\ldots,\cF_{11}\}$:
\[ 
\left( \begin{array}{cccccccccccc}
H  \\
\cF_1  \\
\cF_2 \\
\cF_3\\
\cF_4 \\
\cF_5\\
\cF_6 \\
\cF_7 \\
\cF_8\\
\cF_9\\
\cF_{10}\\
\cF_{11}
\end{array} \right)
\rightarrow
\left( \begin{array}{cccccccccccc}
2H-\cF_1 -\cF_2 -\cF_3 -\cF_5 \\
\cF_4\\
\cF_5\\
\cF_6\\
\cF_7\\
\cF_8\\
\cF_9+\cF_{11} \\
\cF_9\\
\cF_{10}\\
H-\cF_2 -\cF_3 -\cF_5  \\
H-\cF_1 -\cF_3 -\cF_5  \\
\cF_3 -\cF_4 
\end{array} \right).
\]
This allows us for an alternative computation of the degrees $d(n)=\deg f^n$ as the $(H,H)$ entry of the matrix $((f_Y)_*)^n$, resulting in the same formula $d(n)=n^2-n+2$ as in Section \ref{sect Example 2 degree computation}.

Another application of this algebraically stable lift and its Picard group is a computation of the generalized Darboux polynomials, alternative to the one from Section \ref{sect Example 2 inv}. This time, we can define generalized Darboux polynomials $P$ by the invariance of $\overline{\{P=0\}}$ under $f_Y^*$. The eigenspace of $(f_Y)_*$ for the eigenvalue 1 contains an effective divisor class $H-\cF_1-\cF_3-\cF_6-\cF_9-\cF_{11}$ with a one-dimensional linear system corresponding to the linear polynomial $P=z$, as well as an effective divisor class  
$$
2H-\cF_1 -\cF_2 -\cF_3 -\cF_5-\cF_6 -\cF_8 -\cF_9 -\cF_{10}- \cF_{11}
$$
with a two-dimensional linear system corresponding to the pencil of quadrics 
$$
Q_\lambda=\{w(x+y+z-w)+\lambda z^2=0\},
$$ 
discussed in Section \ref{sect Example 2 inv}.


\section{Example 3: a 3D map obtained by inflation of a QRT map}
\label{sect Example 3}

\subsection{Introduction}
This example is from \cite{Viallet_2019}, which provides a birational map $f$ on $\mbP^3$ with cubic degree growth. The map is obtained from a second order difference equation
\begin{equation}\label{eq QRT}
\xi_{n+1}+\xi_{n-1}=-\frac{d\xi_n^2+b\xi_n+a}{d\xi_n+c}
\end{equation}
via a non-invertible ``inflation transformation'' $\xi_n=\eta_n+\eta_{n-1}$. The resulting third order difference equation gives in homogeneous coordinates the following birational map of $\mbP^3$:
\begin{equation}\label{ex3 f}
 f: \left[\begin{array}{c} x\\y\\z\\w\end{array}\right]\mapsto\left[\begin{array}{c} -d(x+y)(2x+2y+z)-bw(x+y)-cw(x+y+z)-aw^2 \\ x\big(d(x+y)+cw\big) \\ y\big(d(x+y)+cw\big)\\w\big(d(x+y)+cw\big)\end{array}\right].
\end{equation}
The inverse map is given by
\begin{equation}\label{ex3 f inv}
 f^{-1}: \left[\begin{array}{c} x\\y\\z\\w\end{array}\right]\mapsto\left[\begin{array}{c} y\big(d(y+z)+cw\big) \\ z\big(d(y+z)+cw\big) \\ -d(y+z)(x+2y+2z)-bw(y+z)-cw(x+y+z)-aw^2 \\ w\big(d(y+z)+cw\big)\end{array}\right].
\end{equation}

Under the condition $ad+c^2-bc\neq 0$, which we always assume, the indeterminacy set of $f$ is the line
\begin{equation}\label{ex3 I(f)}
{\mathcal I}(f)=\{x+y=0, \; w=0\}.
\end{equation}
Likewise, the indeterminacy set of $f^{-1}$ is the line
\begin{equation}\label{ex3 I(f inv)}
{\mathcal I}(f^{-1})=\{y+z=0,\; w=0\}.
\end{equation}
The critical sets of the maps $f$ and $f^{-1}$ are the planes
\begin{equation}\label{ex3 C(f)}
{\mathcal C}(f)=\{d(x+y)+cw=0\},
\end{equation}
resp. 
\begin{equation}\label{ex3 C(f inv)}
{\mathcal C}(f^{-1})=\{d(y+z)+cw=0\}.
\end{equation}

The plane $\{d(x+y)+cw=0\}$ is blown down to a point whose further iterates land at ${\mathcal I}(f)$:
\begin{equation}\label{eq Example3 confined}
f:\; {\mathcal C}(f)= \{d(x+y)+cw=0\} \; \to \; p_1 \; \to \; p_2\; \to \; p_3\in {\mathcal I}(f),
\end{equation}
where
$$
p_1=[1:0:0:0], \;\; p_2=[2:-1:0:0],\;\; p_3=[2:-2:1:0].
$$

We will regularize this map following the prescription from \cite{Alonso_Suris_Wei_2D}, via blowing up the orbit of $p_1$, and then we will compute the degrees of $f^n$ with the help of local indices of polynomials at the relevant blow-ups.  
We will see that, via a regularization of the map $f$, the confinement pattern \eqref{eq Example3 confined} can be, in a certain sense, extended beyond $p_3$:
\begin{equation}\label{eq Example3 long confined}
f:\; {\mathcal C}(f)= \{d(x+y)+cw=0\}  \; \to \; p_1 \; \to \; p_2\; \to \; p_3\; \to \; p_4\;\;  \toitself
\end{equation}
where
$$
p_4=[-1:1:-1:0]\in{\mathcal I}(f).
$$
The precise meaning of $p_4$ being a fixed singular point will be clarified in the following discussion.

\subsection{Regularization of the map $f$ by blow-ups}
\label{sect Example3 reg}

\paragraph{Action of $f$ on the critical plane $\{d(x+y)+cw=0\}$.} The critical plane ${\mathcal C}(f)=\{d(x+y)+cw=0\}$ is contracted by $f$ to the point $p_1$. We resolve this with a sequence of three blow-ups at $p_1$.
\begin{itemize}
    \item At the first step, we blow up the point $p_1=[1:0:0:0]\in \mbP^3$: 
\begin{equation}
\pi_1:\, \left\{\begin{aligned} 
x&=1,\\y&=u_1v_1,\\ z&=u_1w_1,\\w&=u_1.\end{aligned}\right.
\end{equation}
The exceptional divisor of $\pi_1$ is given by $E_1=\{u_1=0\}$. The image of the (proper transform of the)  plane $\{d(x+y)+cw=0\}$ in the exceptional divisor is the curve  $\{d(v_1+w_1)+c=0\}\subset E_1$.
    \item At the second step, we blow up this curve in $E_1$:
$$
\pi_2:\; \left\{\begin{aligned} u_1&=u_2, \\ v_1&=v_2, \\w_1&=-v_2-\dfrac{c}{d}+w_2u_2,  \end{aligned}\right. \quad \Rightarrow\quad 
\pi_1\circ \pi_2:\, \left\{\begin{aligned} x&=1, \\ y&=u_2v_2,\\ z&=-u_2\Big(v_2+\dfrac{c}{d}\Big)+ u_2^2w_2, \\ w&=u_2. \end{aligned}\right. 
$$
The exceptional divisor of this blow-up is given by $E_2=\{u_2=0\}$. The image of the (proper transform of the)  plane $\{d(x+y)+cw=0\}$ in $E_2$ is the curve $\{w_2=-(ad+c^2-bc)/d^2\}\subset E_2$.
    \item At the third step, we blow up the latter curve:
$$
\pi_3:\,\left\{\begin{aligned} u_2&=u_3, \\ v_2&=v_3, \\ w_2&=-\dfrac{ad+c^2-bc}{d^2}+w_3u_3,  \end{aligned}\right. 
$$
\begin{equation}\label{eq E3}
\qquad \Rightarrow\quad 
\phi_3=\pi_1\circ\pi_2\circ\pi_3:\,\left\{\begin{aligned} x&=1, \\ y&=u_3v_3,\\ z&=-\Big(v_3+\dfrac{c}{d}\Big)u_3-\dfrac{ad+c^2-bc}{d^2}u_3^2+w_3u_3^3, \\ w&=u_3. \end{aligned}\right. 
\end{equation}
The exceptional divisor of this blow-up is given by $E_3=\{u_3=0\}$. The (proper transform of the)  plane $\{d(x+y)+cw=0\}$ is mapped by the lift of $f$ birationally to $E_3$.
\end{itemize}

\paragraph{Action of $f$ on $E_3$.}
Next, we consider the fate of the exceptional divisor $E_3$ under $f$. The lift of $f$ under $\phi_3$ contracts $E_3$ to the point $p_2$. We resolve this by a sequence of three blow-ups at $p_2$. 
\begin{itemize}
    \item At the first step, we blow up the point $p_2=[-2:1:0:0]\in \mbP^3$: 
    $$
    \pi_4:\,\left\{\begin{aligned} x&=-2+u_4w_4,\\ y&=1, \\ z&=u_4v_4, \\ w&=u_4.\end{aligned}\right.
    $$
The exceptional divisor of $\pi_4$ is given by $E_4=\{u_4=0\}$. The image of $E_3$ under the lift of $f$ is the curve $\Big\{v_4+w_4=-\dfrac{b-2c}{d}\Big\}\subset E_4$.
    \item At the second step, we blow up this curve:
    $$
     \pi_5:\,\left\{\begin{aligned} u_4&=u_5,\\ v_4&=v_5, \\ w_4&=-v_5-\dfrac{b-2c}{d}+u_5w_5,  \end{aligned}\right. \!\! \Rightarrow \;
     \pi_4\circ\pi_5:\,\left\{\begin{aligned} x&=-2-u_5\Big(v_5+\dfrac{b-2c}{d}\Big)+u_5^2w_5,\\ y&=1, \\ z&=u_5v_5, \\ w&=u_5.\end{aligned}\right.
     $$
The exceptional divisor of $\pi_5$ is given by  $E_5=\{u_5=0\}$. The image of $E_3$ under the lift of $f$ is the curve $\{w_5=0\}$ in $E_5$.
     \item At the third step, we blow up the latter curve:
$$
  \pi_6:\,\left\{\begin{aligned} u_5&=u_6,\\ v_5&=v_6, \\ w_5&=u_6w_6,  \end{aligned}\right. \hspace{3.5cm}
$$
\begin{equation}\label{eq E6}
  \quad \Rightarrow \quad 
   \phi_6=\pi_4\circ\pi_5\circ\pi_6:\,\left\{\begin{aligned} x&=-2-u_6\Big(v_6+\dfrac{b-2c}{d}\Big)+u_6^3w_6,\\ y&=1, \\ z&=u_6v_6, \\ w&=u_6.
  \end{aligned}\right.
\end{equation}
The exceptional divisor of $\pi_6$ is given by  $E_6=\{u_6=0\}$, and the lift of $f$ by $\phi_6$ maps $E_3$ to $E_6$ birationally.
\end{itemize}

\paragraph{Action of $f$ on $E_6$.} The lift of $f$ under $\phi_6$ contracts $E_6$ to the point $p_3$. To resolve this, we perform a sequence of three blow-ups at $p_3$. 
\begin{itemize}
    \item We first blow up the point $p_3=[2:-2:1:0]\in\mbP^3$:
     $$
     \pi_7:\,\left\{\begin{aligned} x&=2+u_7v_7,\\ y&=-2+u_7w_7, \\ z&=1, \\ w&=u_7.\end{aligned}\right.
     $$ 
The exceptional divisor of this blow-up is  $E_7=\{u_7=0\}$. The image of $E_6$ under the lift of $f$ by $\pi_7$ is the curve $\Big\{v_7+w_7=-\dfrac{c}{d}\Big\}\subset E_7$.
     \item Then we blow up this curve:
     $$
   \pi_8:\, \left\{\begin{aligned} u_7&=u_8,\\ v_7&=v_8, \\ w_7&=-v_8-\dfrac{c}{d}+u_8w_8,  \end{aligned}\right. \quad \Rightarrow \quad 
   \pi_7\circ\pi_8:\,\left\{\begin{aligned} x&=2+u_8v_8,\\ y&=-2-u_8\Big(v_8+\dfrac{c}{d}\Big)+u_8^2w_8, \\ z&=1, \\ 
   w&=u_8.\end{aligned}\right.
    $$
The exceptional divisor of this blow-up is  $E_8=\{u_8=0\}$. The image of $E_6$ under the lift of $f$ by $\pi_7\circ\pi_8$ is the curve $\Big\{w_8=\dfrac{1}{d^2}(ad+c^2-bc)\Big\}\subset E_8$.
    \item Next, we blow up the latter curve:
$$
\pi_9:\,\left\{\begin{aligned} u_8&=u_9, \\ v_8&=v_9,\\ w_8&=\dfrac{1}{d^2}(ad+c^2-bc)+u_9w_9,  \end{aligned}\right.
$$
\begin{equation}\label{eq E9}
 \phi_9=\pi_7\circ\pi_8\circ\pi_9:\,  \quad \Rightarrow \quad
    \left\{\begin{aligned} x&=2+u_9v_9,\\ y&=-2-u_9\Big(v_9+\dfrac{c}{d}\Big)+\dfrac{1}{d^2}(ad+c^2-bc)u_9^2+w_9u_9^3, \\ 
  z&=1, \\ w&=u_9.\end{aligned}\right.
\end{equation}
The exceptional divisor of $\pi_9$ is given by  $E_9=\{u_9=0\}$, and the lift of $f$ by $\phi_9$ maps $E_6$ to $E_9$ birationally.
\end{itemize}

\paragraph{Action of $f$ on $E_9$.} The lift of $f$ by $\phi_9$ contracts $E_9$ to the point $p_4=[-1:1:-1:0]\in\mbP^3$. Indeed, a straightforward computation of the lifted action of $f$ on the coordinate patch \eqref{eq E9} leads to
\begin{equation}\label{eq B3}
   \tilde f\circ \phi_9=u_9^2\left(\begin{array}{c} 2d^2(ad+c^2-bc)+O(u_9) \\ -2d^2(ad+c^2-bc)+O(u_9) \\ 2d^2(ad+c^2-bc)+O(u_9) \\ -d^2u_9(ad+c^2-bc+d^2u_9w_9) \end{array}\right).
\end{equation}
To resolve this, we perform a blow-up at the point $p_4$:
\begin{equation}\label{eq E10}
    \phi_{10}:\,\left\{\begin{aligned} x&=-1+u_{10}v_{10},\\ y&=1,\\ z&=-1+u_{10}w_{10},\\ w&=u_{10}.\end{aligned}\right.
\end{equation}
The exceptional divisor of this blow-up is given by $E_{10}=\{u_{10}=0\}$. The image of $E_9$ under the lift of $f$ by $\phi_{10}$ is the curve
\begin{equation}\label{image in E10}
\Big\{w_{10}=-\dfrac{c}{d}\Big\} \subset E_{10}.
\end{equation} 

Compute the lifted action of $f$ on the coordinate patch \eqref{eq E10}:
\begin{eqnarray}\label{eq f phi10}
\tilde f\circ \phi_{10} & = & u_{10}\begin{pmatrix} (dv_{10}+c) -u_{10}(dv_{10}+c)(v_{10}+w_{10})-u_{10}(dv_{10}^2+bv_{10}+a)
 \\  (dv_{10}+c)(-1+u_{10}v_{10})  \\ dv_{10}+c \\ u_{10}(dv_{10}+c)\end{pmatrix} \qquad\nonumber\\
 & = &  
u_{10}\begin{pmatrix} -1+\widetilde{u}_{10}\widetilde{v}_{10} \\  1  \\ -1+\widetilde{u}_{10}\widetilde{w}_{10} \\ \widetilde{u}_{10}\end{pmatrix} = u_{10}\,\widetilde{\phi}_{10}(\widetilde{u}_{10},\widetilde{v}_{10},\widetilde{w}_{10}),
\end{eqnarray}
where
$$
\widetilde{u}_{10}=-\frac{u_{10}}{1-u_{10}v_{10}}\sim u_{10},
$$
and 
\begin{equation}\label{QRT in E10}
\widetilde{v}_{10}=-w_{10}-\frac{dv_{10}^2+bv_{10}+a}{dv_{10}+c}, \quad \widetilde{w}_{10}=v_{10}.
\end{equation}
We see that the curve 
\begin{equation}\label{singular curve in E10}
\left\{v_{10}=-\frac{c}{d}\right\}\subset E_{10}=\{u_{10}=0\}
\end{equation} 
belongs to the indeterminacy set of the lifted map $f$, but away from this curve $E_{10}$ is invariant, and the restriction of the lifted map $f$ to $E_{10}\setminus \{v_{10}=-c/d\}$ is given by \eqref{QRT in E10},  which is nothing but the original QRT map \eqref{eq QRT}. This observation was made in \cite{Viallet_2019}.

In order to completely regularize the map $f$, we would need to blow-up the curve  \eqref{image in E10} and all its further images in $E_{10}$. However, this turns out to be not necessary for the degree computations. Indeed, the successive images of the curve  \eqref{image in E10} are irreducible curves in $E_{10}$, not containing the singular curve \eqref{singular curve in E10}. Thus, the lift $f_X$ of $f$ to the variety $X$ obtained from $\mbP^3$ by the blow-ups $\phi_3$ at $p_1$, $\phi_6$ at $p_2$, $\phi_9$ at $p_3$ and $\phi_{10}$ at $p_4$ is algebraically stable, an it suffices to use the indices of polynomials with respect to these blow-ups. Compare Example 3 in \cite{Alonso_Suris_Wei_2D}. The action of this lift can be summarized as follows:
\begin{equation}
f_X:\;  \overline{\{d(x+y)+cw=0\}}  \; \dasharrow \; E_3 \; \dasharrow \; E_6\; \dasharrow \; E_9\; \dasharrow \; E_{10}\;\;  \toitself
\end{equation}
which is an explanation of equation \eqref{eq Example3 long confined}.

\subsection{Indices of polynomials with respect to blow-ups and degree computations}

We proceed with defining the indices of polynomials with respect to the above mentioned blow-ups.

\begin{de}
Let $P(x,y,z,w)$ be a homogeneous polynomial. The index $\nu_3(P)$ with respect to the blow-up map $\phi_3$ from \eqref{eq E3} is defined by the relation
$$
P\circ\phi_3=u_3^{\nu_3(P)}\widetilde{P}(u_3,v_3,w_3), \quad \widetilde{P}(0,v_3,w_3)\not\equiv 0.
$$
\end{de}
Similarly one defines the indices $\nu_6(P)$, $\nu_9(P)$, $\nu_{10}(P)$ with respect to the blow-ups $\phi_6$ from  \eqref{eq E6}, $\phi_9$ from \eqref{eq E9}, and $\phi_{10}$ from \eqref{eq E10}.

The following result is a specialization of Theorem 4.5 from \cite{Alonso_Suris_Wei_2D}.
\begin{pro}\label{ex3 lemma1}
For a homogeneous polynomial $P$, its pull back $f^*P=P\circ f$ is divisible by $(d(x+y)+cw)^{\nu_3(P)}$ and by no higher degree of $d(x+y)+cw$.
\end{pro}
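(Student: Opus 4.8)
The plan is to imitate the proof of Proposition \ref{ex2 lemma1}: substitute the polynomial lift $\tilde f$ into $P$, pull out the one component of $\tilde f$ that does not vanish on the critical plane, and recognise what remains as $P\circ\phi_3$ written in the local blow-up coordinates. Set $K=d(x+y)+cw$, so that by \eqref{ex3 f} the lift reads $\tilde f=(f_1,\,xK,\,yK,\,wK)$ with $f_1$ its first component. On $\{K=0\}$ the last three slots vanish, and a short substitution (using $x+y=-cw/d$) gives $f_1|_{\{K=0\}}=-\tfrac{ad+c^2-bc}{d}\,w^2$, which is not identically zero thanks to the standing assumption $ad+c^2-bc\neq0$; this is exactly why $f$ contracts $\{K=0\}$ to $p_1=[1:0:0:0]$, and it identifies $f_1$ as the correct component to factor out.

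First I would factor, for $P$ homogeneous of degree $d$,
\[
f^*P=P(f_1,\,xK,\,yK,\,wK)=f_1^{\,d}\,P\!\left(1,\tfrac{xK}{f_1},\tfrac{yK}{f_1},\tfrac{wK}{f_1}\right).
\]
Next I would check that the argument of $P$ on the right equals $\phi_3(u_3,v_3,w_3)$ for coordinates read off from \eqref{eq E3}: matching the four slots forces
\[
u_3=\frac{wK}{f_1},\qquad v_3=\frac{x}{w},
\]
while $w_3$ is the (rational, explicitly solvable but irrelevant) function obtained by equating the cubic-in-$u_3$ expression for the $z$-slot of \eqref{eq E3} with $yK/f_1$. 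With this identification and the definition of $\nu_3$,
\[
f^*P=f_1^{\,d}\,(P\circ\phi_3)(u_3,v_3,w_3)=f_1^{\,d}\,u_3^{\,\nu_3(P)}\,\widetilde P(u_3,v_3,w_3)=f_1^{\,d-\nu_3(P)}\,(wK)^{\nu_3(P)}\,\widetilde P(u_3,v_3,w_3),
\]
which is manifestly divisible by $K^{\nu_3(P)}$.

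To rule out a higher power of $K$, I would read off the order of vanishing of each factor along the reduced irreducible plane $\{K=0\}$. Both $f_1$ and $w$ are generically nonzero there ($\{K=w=0\}$ is the line $\{x+y=w=0\}$, not the whole plane, as $d\neq0$), so they contribute order $0$; and $\widetilde P(u_3,v_3,w_3)$ restricts to $\widetilde P(0,v_3,w_3)$, which is not identically zero by the defining property of $\widetilde P$, hence also contributes order $0$. Thus the total order of $f^*P$ along $\{K=0\}$ equals that of $u_3^{\nu_3(P)}=(wK/f_1)^{\nu_3(P)}$, namely exactly $\nu_3(P)$.

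The only real obstacle, as in Example 2, is bookkeeping: one must be sure the substituted expression is a genuine polynomial (that the denominators $f_1$ and $w$ cancel) and that a rational identity may be promoted to a divisibility statement. Both are guaranteed by Theorem 4.5 of \cite{Alonso_Suris_Wei_2D}, of which this Proposition is the stated specialization; the computation above merely exhibits the factor $K^{\nu_3(P)}$ explicitly.
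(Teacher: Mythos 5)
Your proposal is correct and follows exactly the route the paper intends: the paper's own proof of this proposition simply says it is ``essentially the same as for Proposition~\ref{ex2 lemma1}'', and your argument is precisely that computation transplanted to Example~3 — factor out the component $f_1$ that is nonvanishing on $\{d(x+y)+cw=0\}$, identify the remaining argument of $P$ with $\phi_3(u_3,v_3,w_3)$ via $u_3=wK/f_1$, $v_3=x/w$, and read off the order of vanishing along the critical plane from the definition of $\nu_3$. Your explicit check that $f_1|_{\{K=0\}}=-\tfrac{ad+c^2-bc}{d}\,w^2\not\equiv 0$ is exactly the fact needed to see that $f_1$ and $\widetilde P$ contribute order zero, so the exponent is exactly $\nu_3(P)$.
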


\begin{proof}
The proof is essentially the same as for Proposition \ref{ex2 lemma1}.
\end{proof}

As usual, for any homogeneous polynomial $P=P_0$, we define iterated proper pull-backs $P_{n+1}$ by 
\begin{equation}\label{P n+1}
P_{n+1}=\big(d(x+y)+cw\big)^{-\nu_3(P_n)}(f^*P_n),
\end{equation} 
and set $d(n)=\deg(P_n)$ and $\nu_i(n)=\nu_i(P_n)$.

\begin{theo}
These quantities satisfy the following recurrent relations:
\begin{align}
    d(n+1)&=2d(n)-\nu_3(n), \label{d n+1}\\
     \nu_3(n+1)&=\nu_6(n), \label{mu1 2}\\
     \nu_6(n+1)&=\nu_9(n),  \label{mu2 2}\\
     \nu_9(n+1)&=2 d(n)-2\nu_3(n)+\nu_{10}(n) , \label{mu3 2}\\
     \nu_{10}(n+1)&= d(n)-\nu_3(n)+\nu_{10}(n). \label{mu4 2}
\end{align}
\end{theo}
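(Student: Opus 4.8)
The plan is to mirror the proof of Theorem~\ref{ex2 Th degrees} in Example~2, since the singularity pattern here is of the same type, $\overline{\{d(x+y)+cw=0\}}\dasharrow E_3\dasharrow E_6\dasharrow E_9\dasharrow E_{10}\toitself$, with the one essential difference that the last divisor maps to itself rather than off the tower. Write $K=d(x+y)+cw$ for the single critical factor. The definition \eqref{P n+1} together with Proposition~\ref{ex3 lemma1} shows that $\nu_3(P_n)$ is exactly the multiplicity of $K$ in $f^*P_n$, and since the local order of vanishing in each blow-up coordinate is additive over products, one obtains the master relation $\nu_i(P_{n+1})=\nu_i(f^*P_n)-\nu_3(P_n)\,\nu_i(K)$ for every index, together with $d(n+1)=\deg(f^*P_n)-\nu_3(P_n)\deg K$. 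The first preparatory step is therefore to compute the local indices of $K$ itself by substituting the charts \eqref{eq E3}, \eqref{eq E6}, \eqref{eq E9}, \eqref{eq E10} into $K$; I expect to find $\nu_3(K)=\nu_6(K)=0$ (as $K$ does not vanish at $p_1$ and $p_2$), $\nu_9(K)=2$, and $\nu_{10}(K)=1$. After this, everything reduces to computing the four quantities $\nu_i(f^*P_n)$.

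Relation \eqref{d n+1} is then immediate, because $f$ has degree $2$ and $\deg K=1$. For the two shift relations \eqref{mu1 2} and \eqref{mu2 2} I would argue as for \eqref{ex2 mu1 2}--\eqref{ex2 mu2 2}: the lift $f_X$ carries $E_3$ to $E_6$ and $E_6$ to $E_9$ birationally (surface onto surface, no contraction), so $\tilde f\circ\phi_3$ and $\tilde f\circ\phi_6$ are, up to a factor non-vanishing along the exceptional divisor, simply reparametrisations of the target charts; hence $\nu_3(f^*P_n)=\nu_6(P_n)$ and $\nu_6(f^*P_n)=\nu_9(P_n)$. Combined with $\nu_3(K)=\nu_6(K)=0$, the master relation collapses to exactly \eqref{mu1 2} and \eqref{mu2 2}.

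Relation \eqref{mu3 2} is the first one carrying a degree term. Here $E_9$ is genuinely contracted: substituting \eqref{eq E9} into $f^*P_n$ and invoking \eqref{eq B3}, the contraction of $E_9$ to $p_4$ pulls out $u_9^{2d(n)}$ by homogeneity, while the residual parametrisation approaches the curve $\{w_{10}=-c/d\}\subset E_{10}$; re-expressing it in the $\phi_{10}$-chart, where the coordinate satisfies $u_{10}\sim u_9$, shows that $P_n$ contributes a further $u_9^{\nu_{10}(n)}$. This gives $\nu_9(f^*P_n)=2d(n)+\nu_{10}(n)$, and subtracting $\nu_3(P_n)\,\nu_9(K)=2\nu_3(n)$ produces \eqref{mu3 2}. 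The point to verify carefully is that the image curve \eqref{image in E10} is not contained in the singular curve \eqref{singular curve in E10}, so that the $\phi_{10}$-index of $P_n$ is really picked up.

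The crux, and the step I expect to be the main obstacle, is \eqref{mu4 2}, which encodes the self-map $E_{10}\toitself$ and the stabilisation of indices. The decisive ingredient is the computation \eqref{eq f phi10}, which presents $\tilde f\circ\phi_{10}=u_{10}\,\widetilde\phi_{10}(\widetilde u_{10},\widetilde v_{10},\widetilde w_{10})$, where $\widetilde\phi_{10}$ has again the chart form \eqref{eq E10} and $\widetilde u_{10}\sim u_{10}$. By homogeneity $P_n(\tilde f\circ\phi_{10})=u_{10}^{d(n)}P_n(\widetilde\phi_{10})$, and since $\widetilde\phi_{10}$ is the $\phi_{10}$-parametrisation, the definition of the index gives $P_n(\widetilde\phi_{10})\sim\widetilde u_{10}^{\nu_{10}(n)}\sim u_{10}^{\nu_{10}(n)}$, whence $\nu_{10}(f^*P_n)=d(n)+\nu_{10}(n)$; subtracting $\nu_3(P_n)\,\nu_{10}(K)=\nu_3(n)$ yields \eqref{mu4 2}. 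The subtle features to get right are that the overall factor is a single power $u_{10}$ --- in contrast to the power $u_{10}^{2}$ and the \emph{absence} of a self-term in the analogous Example~2 computation \eqref{ex2 mu4 1}, where $E_{10}$ is sent to a plane off the tower --- and that $\widetilde u_{10}$ is a unit multiple of $u_{10}$ with no spurious extra factor hidden in $\widetilde\phi_{10}$, so that the self-index $\nu_{10}(n)$ reappears with coefficient exactly one. This self-term is precisely the mechanism that makes the recurrence terminate without blowing up the infinite forward orbit of the curve \eqref{image in E10}.
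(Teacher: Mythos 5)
Your proposal is correct and follows essentially the same route as the paper: the same master relation $\nu_i(P_{n+1})=\nu_i(f^*P_n)-\nu_3(P_n)\nu_i(K)$ with $\nu_9(K)=2$, $\nu_{10}(K)=1$, the birationality argument for the two shift relations, and the key computations $\nu_9(f^*P_n)=2d(n)+\nu_{10}(n)$ from \eqref{eq B3} and $\nu_{10}(f^*P_n)=d(n)+\nu_{10}(n)$ from \eqref{eq f phi10}. The extra checks you flag (the image curve \eqref{image in E10} not lying in the singular curve \eqref{singular curve in E10}, and $\widetilde u_{10}$ being a unit multiple of $u_{10}$) are exactly the points the paper addresses in the surrounding discussion of Section \ref{sect Example3 reg}.
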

\begin{proof}
Equation \eqref{d n+1} follows immediately from \eqref{P n+1}. 

Equations \eqref{mu1 2}, \eqref{mu2 2} follow from the fact that $f_X$ acts birationally in the neighborhoods of the exceptional divisors $E_3$ and $E_6$ of the blow-ups $\phi_3$ and $\phi_6$ over the points $p_1=[1:0:0:0]$ and $p_2=[2:-1:0:0]$, respectively, and the factor $d(x+y)+cw$ does not vanish at these points. 

To prove \eqref{mu3 2}, we derive from \eqref{eq B3}:
$$
\nu_9(f^*P_n)=2 d(n)+\nu_{10}(n),
$$
and then the result follows from \eqref{P n+1} by using $\nu_9\big(d(x+y)+cw\big)=2$.

Similarly, to prove \eqref{mu4 2}, we derive from \eqref{eq f phi10}:
$$
\nu_{10}(f^*P_n)=d(n)+\nu_{10}(n),
$$
and then the result follows from \eqref{P n+1} by using $\nu_{10}\big(d(x+y)+cw\big)=1$. 
\end{proof}

For a generic polynomial $P_0$ of degree 1 (i..e., a polynomial not passing through any of the points $p_1,\ldots,p_4$, for instance, $P_0=x$), we compute the following:
\medskip

\begin{center}
\begin{tabular}[h]{c|c|cccc}
\hline
$n$ & $d(n)$ & $\nu_3(n)$ & $\nu_6(n)$ & $\nu_9(n)$ & $\nu_{10}(n)$ \\
\hline
0 & 1 & 0 & 0 & 0 & 0\\
1 & 2 & 0 & 0 & 2 & 1 \\
2& 4 & 0 & 2 & 5 & 3 \\
3 & 8 & 2 & 5 & 11 & 7 \\
4 & 14 & 5 & 11 & 19 & 13\\
5 & 23 & 11 & 19 & 31 & 22 \\
6 & 35 & 19 & 31 & 46 & 34 \\
7 & 51 & 31 & 46 & 66 & 50\\
8 & 71 & 46 & 66 & 90 & 70\\
9 & 96 & 66 & 90 & 120  & 95 \\
10 & 126 & 90 & 120 & 155 & 125
\end{tabular}
\end{center}
\medskip

\begin{theo}
For any polynomial $P_0$, the degrees $d(n)=\deg(P_n)$ satisfy the following recurrence relation:
\begin{equation}\label{d recur}
    d(n+1)-3d(n)+2d(n-1)+2d(n-2)-3d(n-3)+d(n-4)=0.
\end{equation}
\end{theo}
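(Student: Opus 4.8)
The plan is to treat the five relations \eqref{d n+1}--\eqref{mu4 2} as a single first-order linear recursion for the state vector $(d(n),\nu_3(n),\nu_6(n),\nu_9(n),\nu_{10}(n))$ and then eliminate the four indices, leaving a pure scalar recurrence for $d(n)$. This is exactly the strategy used in the Corollary following Theorem \ref{ex2 Th degrees}, merely carried out one order higher; no new idea is needed, only more careful bookkeeping of index shifts.

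Concretely, I would first solve \eqref{d n+1} for the leading index, $\nu_3(n)=2d(n)-d(n+1)$. Relations \eqref{mu1 2} and \eqref{mu2 2} are pure shifts, so $\nu_6(n)=\nu_3(n+1)=2d(n+1)-d(n+2)$ and $\nu_9(n)=\nu_6(n+1)=2d(n+2)-d(n+3)$; at this point every index is expressed through the degrees. Substituting these, together with $\nu_9(n+1)=2d(n+3)-d(n+4)$, into \eqref{mu3 2} and solving for $\nu_{10}(n)$ gives
\begin{equation*}
\nu_{10}(n)=2d(n)-2d(n+1)+2d(n+3)-d(n+4).
\end{equation*}
It then only remains to impose the last relation \eqref{mu4 2}: I would shift the displayed formula by one to obtain $\nu_{10}(n+1)$, substitute it together with $\nu_3(n)=2d(n)-d(n+1)$ and the expression for $\nu_{10}(n)$, and collect terms. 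After cancellation this yields
\begin{equation*}
d(n+5)-3d(n+4)+2d(n+3)+2d(n+2)-3d(n+1)+d(n)=0,
\end{equation*}
which is precisely \eqref{d recur} after the reindexing $n\mapsto n-4$.

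Conceptually this elimination is nothing but the Cayley--Hamilton theorem applied to the companion matrix of the system \eqref{d n+1}--\eqref{mu4 2}, namely
\begin{equation*}
M=\begin{pmatrix} 2 & -1 & 0 & 0 & 0 \\ 0 & 0 & 1 & 0 & 0 \\ 0 & 0 & 0 & 1 & 0 \\ 2 & -2 & 0 & 0 & 1 \\ 1 & -1 & 0 & 0 & 1 \end{pmatrix},
\end{equation*}
whose characteristic polynomial one computes to be $\chi_M(\lambda)=\lambda^5-3\lambda^4+2\lambda^3+2\lambda^2-3\lambda+1=(\lambda-1)^4(\lambda+1)$. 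Each coordinate of $M^n v_0$, in particular $d(n)$, annihilates $\chi_M$, and its coefficients are exactly those of \eqref{d recur}; the factorisation also makes transparent the cubic degree growth, since the eigenvalue $1$ has multiplicity four. I would present the elimination argument rather than the matrix one, as it is self-contained and parallels the earlier examples. The only real obstacle is bookkeeping: one must keep the index shifts consistent across the five substitutions and verify that the terms genuinely collapse to the stated order-five relation (equivalently, that $\chi_M$ is the full characteristic polynomial, not a proper factor), so that \eqref{d recur} is asserted at the correct order.
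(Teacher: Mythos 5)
Your elimination is correct: solving \eqref{d n+1} for $\nu_3(n)=2d(n)-d(n+1)$, propagating through the shifts \eqref{mu1 2}, \eqref{mu2 2} to get $\nu_6$ and $\nu_9$ in terms of degrees, extracting $\nu_{10}(n)=2d(n)-2d(n+1)+2d(n+3)-d(n+4)$ from \eqref{mu3 2}, and then imposing \eqref{mu4 2} does collapse to $d(n+5)-3d(n+4)+2d(n+3)+2d(n+2)-3d(n+1)+d(n)=0$, which is \eqref{d recur}; the companion matrix and its characteristic polynomial $(\lambda-1)^4(\lambda+1)$ also check out. The paper reaches the same conclusion by a slightly different elimination order: it first observes that \eqref{d n+1} and \eqref{mu4 2} together force $\alpha:=d(n)-\nu_{10}(n)$ to be constant in $n$, uses this to reduce \eqref{mu3 2} to $\nu_9(n+1)=3d(n)-2\nu_3(n)-\alpha$, chains $\nu_3(n)=\nu_6(n-1)=\nu_9(n-2)$ to obtain the order-four \emph{inhomogeneous} recurrence $d(n+1)=2d(n)-2d(n-2)+d(n-3)+\alpha$, and finally differences this to eliminate $\alpha$. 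The two routes are equivalent linear eliminations, but the paper's version buys an explicit conserved quantity $\alpha$ (equal to $1$ for a generic linear $P_0$) and an intermediate lower-order recurrence, which makes the structural comparison with Example~2 (where $\nu_{10}(n)=d(n)$, i.e.\ $\alpha=0$ for $n\ge1$) transparent; your version is more mechanical but exposes the spectral picture via the characteristic polynomial, which the paper does not state. One small remark: your worry about whether $\chi_M$ might be a proper factor is unnecessary for the theorem as stated, since any annihilating polynomial of the transition matrix (in particular $\chi_M$ itself, by Cayley--Hamilton) yields a valid recurrence of that order; minimality is not claimed.
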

\begin{proof}
Comparing \eqref{d n+1} with \eqref{mu4 2}, we see that
\begin{equation}\label{mu4 = d}
d(n+1)-\nu_{10}(n+1)=d(n)-\nu_{10}(n)=:\alpha.
\end{equation}
This allows us to re-write \eqref{mu3 2} as 
\begin{equation}\label{mu3 3}
\nu_9(n+1)=3 d(n)-2\nu_3(n)-\alpha, \quad n\ge 1.
\end{equation}
From \eqref{mu1 2}--\eqref{mu4 2}, \eqref{mu3 3} and \eqref{d n+1} we derive:
\begin{eqnarray*}
\nu_3(n)& = &  \nu_6(n-1)= \nu_9(n-2)\\
 & = & 3d(n-3)-2\nu_3(n-3)-\alpha\\
 & = & 2\big(2d(n-3)-\nu_3(n-3)\big)-d(n-3)-\alpha\\
 & = & 2d(n-2)-d(n-3)-\alpha.
\end{eqnarray*}
Plugging this into \eqref{d n+1} gives 
\begin{equation}\label{d recur alpha}
    d(n+1)=2d(n)-2d(n-2)+d(n-3)+\alpha.
\end{equation}
Subtracting from \eqref{d recur alpha} its shifted version,
\begin{equation}\label{d recur alpha shifted}
    d(n)=2d(n-1)-2d(n-3)+d(n-4)+\alpha,
\end{equation}
we finally arrive at \eqref{d recur}.
\end{proof}
Note that in the above table we have $\alpha=1$.

\section{Conclusion}
In this paper, we illustrated the technique for computing degrees of iterates of birational maps of $\mbP^N$, introduced and applied for $N=2$ in our previous paper \cite{Alonso_Suris_Wei_2D} to the case $N=3$. We illustrate our technique with three examples. In the simplest case, illustrated in Example 1 by a generalized discrete time Euler top, our method coincides with the method using the Picard group of the blow-up variety. In Example 2, dealing with a 3D representation of the discrete Painlev\'e I equation, our method deviates substantially from the Picard group method, since it produces a lift which is not algebraically stable. We mention, however, that an algebraically stable lift still exists, and the method of the Picard group is still applicable and leads to the same result. Example 3, an inflated QRT map, illustrates the situation where infinitely many blow-ups would be necessary to regularize the contraction of the critical set, but our method nevertheless succeeds in producing a finite system of recurrent relations for determining the degrees of the iterated map. We hope that our methods and worked examples will contribute towards a progress in general understanding of the dynamics of birational maps in higher dimensions.
\medskip

\textbf{Acknowledgement.}
This research is supported by the DFG Collaborative Research Center TRR 109 ``Discretization in
Geometry and Dynamics''.

\bibliographystyle{siam}

\begin{thebibliography}{10}

\bibitem{ASW_3D_QRT}
{\sc J.~Alonso, Y.~B. Suris, and K.~Wei}, {\em A three-dimensional
  generalization of QRT maps}, arXiv:2207.06051 [nlin.SI],  (2022).

\bibitem{Alonso_Suris_Wei_2D}
\leavevmode\vrule height 2pt depth -1.6pt width 23pt, {\em Dynamical degrees of
  birational maps from indices of polynomials with respect to blow-ups {I.
  G}eneral theory and {2D} examples}, arXiv:2303.15864 [math.DS],  (2023).
  
\bibitem{Bedford_Kim_2004}
{\sc E.~Bedford and K.~Kim}, {\em On the degree growth of birational mappings
  in higher dimension}, J. Geom. Anal., 14 (2004), pp.~567--596.
  
\bibitem{BellonViallet1999}
{\sc M.~P. Bellon and C.-M. Viallet}, {\em Algebraic entropy}, Comm. Math.
  Phys., 204 (1999), pp.~425--437.

\bibitem{Birkett2022}
{\sc R.~A.~P. Birkett}, {\em On the stabilisation of rational surface maps},
  arXiv:2110.00095 [math.DS],  (2021).

\bibitem{Celledoni4}
{\sc E.~Celledoni, C.~Evripidou, D.~I. McLaren, B.~Owren, G.~R.~W. Quispel, and
  B.~K. Tapley}, {\em Detecting and determining preserved measures and
  integrals of birational maps}, J. Comput. Dyn., 9 (2022), pp.~553--574.
  
\bibitem{Diller1996}
{\sc J.~Diller}, {\em Dynamics of birational maps of {${\bf P}^2$}}, Indiana
  Univ. Math. J., 45 (1996), pp.~721--772.

\bibitem{DillerFavre2001}
{\sc J.~Diller and C.~Favre}, {\em Dynamics of bimeromorphic maps of surfaces},
  Amer. J. Math., 123 (2001), pp.~1135--1169.

\bibitem{Ercolani2021}
{\sc N.~Ercolani, J.~Lega, and B.~Tippings}, {\em Dynamics of nonpolar
  solutions to the discrete {P}ainlev\'{e} {I} equation}, SIAM J. Appl. Dyn.
  Syst., 21 (2022), pp.~1322--1351.
  
\bibitem{FornaessSibony2}
{\sc J.~E. Forn{\ae}ss and N.~Sibony}, {\em Complex dynamics in higher
  dimension. {II}}, in Modern methods in complex analysis ({P}rinceton, {NJ},
  1992), vol.~137 of Ann. of Math. Stud., Princeton Univ. Press, Princeton, NJ,
  1995, pp.~135--182.

\bibitem{Gasull2002}
{\sc A.~Gasull and V.~Ma{\~n}osa}, {\em A {Darboux}-type theory of
  integrability for discrete dynamical systems}, J. Difference Eq. Appl., 8
  (2002), pp.~1171--1191.

\bibitem{Goriely2001}
{\sc A.~Goriely}, {\em Integrability and nonintegrability of dynamical
  systems}, vol.~19 of Adv. Ser. Nonlinear Dyn., Singapore: World Scientific,
  (2001).

\bibitem{HirotaKimura2000}
{\sc R.~Hirota and K.~Kimura}, {\em Discretization of the {E}uler top}, J.
  Phys. Soc. Japan, 69 (2000), pp.~627--630.

\bibitem{McMullen_2007}
{\sc C.~T. McMullen}, {\em Dynamics on blowups of the projective plane}, Publ.
  Math. Inst. Hautes \'{E}tudes Sci.,  (2007), pp.~49--89.

\bibitem{PPS2011}
{\sc M.~Petrera, A.~Pfadler, and Y.~B. Suris}, {\em On integrability of
  {H}irota-{K}imura type discretizations}, Regular and Chaotic Dynamics, 16
  (2011), pp.~245--289.

\bibitem{PS2010}
{\sc M.~Petrera and Y.~B. Suris}, {\em On the {H}amiltonian structure of
  {H}irota-{K}imura discretization of the {E}uler top}, Math. Nachr., 283
  (2010), pp.~1654--1663.
  
\bibitem{silverman_2012}
{\sc J.~H. Silverman}, {\em Dynamical degree, arithmetic entropy, and canonical
  heights for dominant rational self-maps of projective space}, Ergodic Theory
  and Dynamical Systems, 34 (2012), p.~647–678.

\bibitem{Viallet_2019}
{\sc C.~M. Viallet}, {\em On the degree growth of iterated birational maps},
  arXiv:1909.13259 [math.AG],  (2019).

\end{thebibliography}

\end{document}